\theoremstyle{plain}
\newtheorem{theorem}{Theorem}
\newtheorem{corollary}[theorem]{Corollary}
\newtheorem{lemma}[theorem]{Lemma}
\newtheorem{example}[theorem]{Example}
\newtheorem{remark}[theorem]{Remark}
\newtheorem{proposition}[theorem]{Proposition}
\newtheorem{algorithm}[theorem]{Algorithm}
\newtheorem{conjecture}[theorem]{Conjecture}
\theoremstyle{definition}
\newtheorem{definition}[theorem]{Definition}
\theoremstyle{remark}
\numberwithin{theorem}{section} 
\newcommand {\Q}{{\mathbb{Q}}}
\newcommand {\C}{{\mathbb{C}}}
\newcommand {\R}{{\mathbb{R}}}
\newcommand {\Z}{{\mathbb{Z}}}
\newcommand {\N}{{\mathbb{N}}}
\newcommand {\F}{{\mathbb{F}}}
\newcommand {\PP}{\mathbb{P}}
\newcommand {\OO}{{\mathcal{O}}}
\newcommand {\ida}{{\mathfrak{a}}}
\newcommand {\idc}{{\mathfrak{c}}}
\newcommand {\idp}{{\mathfrak{p}}}
\newcommand {\idP}{{\mathfrak{P}}}
\newcommand {\fa}{\ida}
\newcommand{\cN}        {{\mathcal N}}
\newcommand{\Norm}        {{\mathcal N}}
\newcommand{\disc}     {{\mathfrak d}}%
\newcommand{\normal}     {\unlhd}%
\newcommand{\id}     {\mathop{\rm {id}}}%
\newcommand{\Gal}      {\mathop{\rm {Gal}}}
\newcommand{\Aut}      {\mathop{\rm {Aut}}}
\newcommand{\Zen}      {\mathop{\rm {Z}}}
\newcommand{\Cl}      {\mathop{\rm {Cl}}}
\newcommand{\ind}        {{\mathop{\rm ind}}}
\newcommand{\rk}        {{\mathop{\rm rk}}}
\begin{document}

\title{The asymptotics of nilpotent Galois groups}

\author{J\"urgen Kl\"uners}
\address{University Paderborn, Department of Mathematics,
Warburger Str. 100, 33098 Paderborn, Germany} 

\email{klueners@math.uni-paderborn.de}

\subjclass[2010]{Primary 11R37; Secondary 11Y40}

\begin{abstract} 
  We prove an upper bound for the  asymptotics of counting functions of number fields with  nilpotent Galois groups.    
\end{abstract}

\maketitle
 
\renewcommand{\theenumi}{(\roman{enumi})}

\section{The counting function}
Let $G\le S_n$ be a finite transitive permutation group with $n>1$
and $k$ be a number field. We say that a finite extension
$K/k$ has Galois group $G$ if the normal closure $\hat K$ of $K/k$ has Galois
group isomorphic to $G$ and $K$ is the fixed field in $\hat K$ under a point
stabilizer of $G$. By abuse of notation we will write $\Gal(K/k)=G$ in this
situation. We let
$$Z(k,G;x):=|\left\{K/k\mid \Gal(K/k)=G,\ \cN_{k/\Q}(d_{K/k})\le
  x\right\}|$$ to be the number of field extensions of $k$ (inside a
fixed algebraic closure $\bar\Q$) of degree~$n$ with Galois group
permutation isomorphic to $G$ and norm of the discriminant $d_{K/k}$
bounded above by $x$.  Note that there are only finitely many number
fields of given degree and bounded discriminant, hence $Z(k,G;x)$
is finite for all $G, k, x$.  The goal of this note is to give the
asymptotics of $Z(k,G;x)$ for $x\rightarrow\infty$ for certain groups
$G$.
Gunter Malle \cite{Ma4,Ma5} has proposed a precise conjecture about the asymptotic 
behavior of the function $Z(k,G;x)$ for $x\rightarrow\infty$. In order
to state it, we introduce some group theoretic invariants of permutation
groups. 
\begin{definition}
  Let $G\leq S_n$ be a transitive subgroup acting on $\Omega=\{1,\ldots,n\}$.
  \begin{enumerate}
  \item For $g\in G$ we define the index $\ind(g):= n- \mbox{ the number of orbits of $g$ on }\Omega.$
  \item $\ind(G):=\min\{\ind(g): \id\ne g\in G\},\; a(G):=\ind(G)^{-1}$.
  \end{enumerate}
\end{definition} 
Since all elements in a conjugacy class $C$ of $G$ have the same index
we can define $\ind(C)$ in a canonical way. The absolute Galois group
of $k$ acts on the set of conjugacy classes of $G$ via the action on the
$\bar\Q$--characters of $G$. The orbits under this action
are called $k$-conjugacy classes.
\begin{definition}
  For a number field $k$ and a transitive subgroup $G\leq S_n$ we define:
  $$b(k,G):=\#\{C : C\; k\mbox{-conjugacy class of $G$ of minimal index }\ind(G)\}.$$
\end{definition}
Now we can state the conjecture of Malle \cite{Ma5}, where $f(x) \sim g(x)$ is a 
notation for $\lim_{x\rightarrow\infty}f(x)/g(x) =1$.
\begin{conjecture}\label{con}(Malle)
  For all number fields $k$ and all transitive permutation groups
  $G\leq S_n$ there exists a constant $c(k,G)>0$ such that
  $$Z(k,G;x) \sim c(k,G)x^{a(G)} \log(x)^{b(k,G)-1},$$
  where $a(G)$ and $b(k,G)$ are given as above.  
\end{conjecture}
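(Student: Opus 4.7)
Since this is Malle's conjecture in full generality, the plan has to be more strategic than technical: the approach is to reduce the counting problem to a Dirichlet series whose analytic behavior can be read off from group-theoretic data, and then to extract the asymptotic via a Tauberian theorem. The first step is parametric: identify each degree-$n$ extension $K/k$ with Galois group $G$ with a continuous surjection $\varphi\colon \Gal(\bar k/k)\twoheadrightarrow G$ modulo an equivalence from the point stabilizer, and express the norm of the relative discriminant via the conductor--discriminant formula as a product $\cN_{k/\Q}(d_{K/k}) = \prod_\idp q_\idp^{e_\idp(\varphi)}$ of local contributions, where at a tame prime $\idp$ the exponent $e_\idp(\varphi)$ equals $\ind(c)$ for $c$ a generator of the tame inertia image. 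This identifies $\ind(G)$ as the smallest possible per-prime cost and explains heuristically why the leading exponent should be $a(G)=\ind(G)^{-1}$.

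The second step is analytic: form the generating series
\[
\Phi(k,G;s) = \sum_{K} \cN_{k/\Q}(d_{K/k})^{-s},
\]
with $K$ running over $G$-extensions of $k$, and try to write it as an Euler product of local factors $\Phi_\idp(s)$ counting local Galois representations weighted by conductor. The local factors each have a leading term $1 + \alpha_\idp q_\idp^{-s\cdot\ind(G)} + \dots$ where the coefficient $\alpha_\idp$ counts the number of local minimal-index classes compatible with the $k$-rational structure. Summed over $\idp$, this predicts a pole at $s=a(G)$ of order $b(k,G)$, after which a standard Tauberian theorem (Delange, or Landau--Ikehara with logarithmic corrections) delivers the claimed asymptotic $c(k,G) x^{a(G)} \log(x)^{b(k,G)-1}$ together with an explicit product formula for $c(k,G)$.

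The third and by far hardest step is to justify the two ingredients above for non-solvable $G$. For \emph{nilpotent} $G$ one can climb the central series and use class field theory at each stage, reducing to an iterated abelian count where the Euler product factorization and its meromorphic continuation are genuine theorems; this is the engine that drives the upper bound treated in this paper, and the reason the nilpotent case is tractable. For general transitive $G$ there is no such parametrization: the Dirichlet series is not known to be an Euler product, its continuation past $\Re(s)=a(G)$ is conjectural, and one must instead combine resolvent constructions (reducing $G$-extensions to $H$-extensions for suitable subgroups $H$) with geometry-of-numbers input in the style of Bhargava. The principal obstacles are (a) producing a matching lower bound by realizing, with positive density, every $k$-rational minimal-index inertia type at every prime simultaneously; (b) controlling wildly ramified primes, where the discriminant exponent exceeds $\ind$ and the conductor--discriminant bookkeeping is delicate; and (c) proving that the error terms coming from non-minimal index classes are genuinely of lower order, which requires sharper analytic estimates than a Tauberian theorem usually delivers. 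Overcoming (a)--(c) uniformly in $G$ is what makes Conjecture~\ref{con} open beyond a limited list of cases, and any proposal that pretends otherwise is optimistic; the honest plan is to establish each piece group by group, with the nilpotent case below as a template.
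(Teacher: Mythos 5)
The statement you were asked to prove is Malle's Conjecture~\ref{con}, which the paper does not prove and does not claim to prove --- it is stated as motivation for the upper bound established in Theorem~\ref{countmain}. You correctly recognize this and give a strategy survey rather than a purported proof, which is the right instinct. There is nonetheless a genuine gap in the proposal as written: the paper itself records, citing \cite{Kl5}, that Conjecture~\ref{con} is in fact \emph{false} in general, because the predicted logarithmic exponent $b(k,G)-1$ is wrong for certain non-nilpotent groups (the standard counterexample is $C_3\wr C_2$ acting on $6$ points over $\Q$). Your proposal treats the conjecture as merely open, and the Euler-product heuristic you offer --- local factors of shape $1+\alpha_\idp\, q_\idp^{-s\,\ind(G)}+\cdots$ yielding a pole of order $b(k,G)$ at $s=a(G)$, hence the claimed asymptotic via Delange --- is precisely the heuristic that those counterexamples refute. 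Any honest program has to either restrict attention to groups for which the heuristic is expected to survive (nilpotent groups, as in this paper, where no counterexamples are known) or replace $b(k,G)$ by a corrected invariant; a proposal that aims at the conjecture as literally stated, for all transitive $G$, is aiming at a false target.

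On the nilpotent template: the paper's mechanism is more delicate than an iterated abelian Euler product, and the distinction matters. The series $\Phi(k,G;s)=\sum_K \cN_{k/\Q}(d_{K/k})^{-s}$ does \emph{not} factor over primes even for abelian $G$, since class field theory (and, higher up the central series, the solvability of embedding problems) imposes global constraints coupling the local data. What the paper actually does is (i) parametrize $G$-extensions as iterated solutions of central embedding problems with kernel $C_{\ell}$, organizing solutions at each stage by an auxiliary $C_{\ell}$-extension of $k$ (Section~\ref{embed}, Theorem~\ref{satzzen2}); (ii) bound, via ray-class-group rank estimates (Lemma~\ref{lbound}, Theorem~\ref{bound_exact}), the fiber of the map sending a $G$-extension to its tuple of newly-ramified ideals (Theorem~\ref{count}); and (iii) dominate the resulting sum over tuples by a Dirichlet series that genuinely is an Euler product (Lemma~\ref{logbound2}), to which Delange's Tauberian theorem applies. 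The Euler product lives on the parametrizing side, not on $\Phi$ itself, and this asymmetry is exactly why the method only yields upper bounds: fibers of the parametrization may be empty when the embedding problem has no solution with the prescribed ramification, so no lower bound comes for free. Your obstacle~(a) is therefore not just ``hard'' but structurally outside the reach of this approach, which is why the paper's Theorem~\ref{countmain} is stated as a one-sided estimate with an exponent $d(k,G)\ge b(k,G)$ that matches $b(k,G)$ only under extra hypotheses (Lemmas~\ref{ab} and~\ref{nil}).
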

This conjecture has been proved for abelian groups \cite{Wr}.
Assuming that $G$ is regular, i.e.~ we count normal nilpotent number fields, we have shown \cite[Thm. 6.3]{KlMa2}:
$$\lim_{x\rightarrow\infty} \frac{\log Z(k,G;x)}{\log x}= a(G).$$
This shows the correctness of the $a$-constant for those groups.

In the
following we use the notation $f(x)=O(g(x))$, if 
$\limsup_{x\rightarrow\infty} f(x)/g(x) <\infty$.
For all number fields $k$, all prime numbers $\ell$, and all $\ell$-groups $G$ it is shown in \cite[Cor. 7.3]{KlMa2} that
\[Z(k,G;x) = O(x^{a(G)+\epsilon})\text{ for all }\epsilon>0.  \]
We remark that \cite[Cor. 1.8]{Alb} is able to prove this upper bound for all nilpotent groups in all transitive representations.

Unfortunately, there are counterexamples \cite{Kl5} with respect to $b(k,G)$. There are no known counterexamples for nilpotent groups.

The goal of this article is to extend the results of \cite{Alb,KlMa2}. In Theorem \ref{count2} we are able to replace the
upper bound of type $x^{a(G)+\epsilon}$ for all $\epsilon>0$ by $x^{a(G)}\log(x)^{d(k,G)-1}$. 
\begin{theorem} \label{countmain}
	Let $G\leq S_n$ be a transitive nilpotent group and $k$ be a number field. Then there exists a constant $d(k,G)\geq b(k,G)$ such
	that
	$$Z(k,G;x) = O(x^{a(G)} \log(x)^{d(k,G)-1}).$$
\end{theorem}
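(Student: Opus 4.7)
\emph{Proof plan.} The strategy is to induct on $|G|$, at each step reducing via a central cyclic quotient of prime order, with the abelian base case supplied by Wright's theorem \cite{Wr}. Wright gives the sharp asymptotic with $d(k,A) = b(k,A)$ for abelian $A$, so the improvement over the earlier $x^{a(G)+\epsilon}$ bound comes from never introducing a $\tau$-type divisor estimate in the argument — only the clean $\log$-factor asymptotics of Wright's theorem.

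\textbf{Sylow reduction to $\ell$-groups.} Because $G$ is nilpotent, $G = \prod_\ell G_\ell$ is the direct product of its Sylow subgroups, and the point stabilizer splits as $H = \prod_\ell H_\ell$. The $G$-set $G/H$ is then the product of the $G_\ell$-sets $G_\ell/H_\ell$, so a $G$-field $K/k$ is uniquely determined by its tuple $(K_\ell/k)_\ell$ of $G_\ell$-subfields. The conductor-discriminant formula shows that $\Norm_{k/\Q}(d_{K/k})$ factors into contributions from each $K_\ell/k$, up to a multiplicative correction at primes simultaneously ramified in several $K_\ell$, and this correction contributes only a Dirichlet factor with no pole at $s = a(G)$. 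Convolving the associated Dirichlet series then reduces the problem to the case of $\ell$-groups.

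\textbf{Inductive step for $\ell$-groups.} Choose a central subgroup $Z \le G$ of order $\ell$ (which exists since $G$ is a nontrivial $\ell$-group) and set $\bar G := G/Z$, a nilpotent group of strictly smaller order. Each $G$-field $K/k$ gives rise, through its normal closure $\hat K/k$, to a normal $\bar G$-extension $\hat K^Z/k$ and hence to $\bar G$-fields. Fixing such a $\bar G$-extension $L/k$, the $G$-fields $K$ above it correspond to cyclic degree-$\ell$ extensions of $\hat L$ satisfying a Galois-equivariance condition. Wright's theorem, applied over $\hat L$, counts such extensions with an explicit $\log$-power asymptotic in their conductor, and the conductor-discriminant formula translates the conductor of $\hat K/\hat L$ into the increment in $\Norm_{k/\Q}(d_{K/k})$. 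Combining the inductive bound $Z(k,\bar G;y) = O(y^{a(\bar G)}\log(y)^{d(k,\bar G)-1})$ with this fiberwise count via a Dirichlet series convolution (or partial summation) yields the claimed bound with $d(k,G)$ equal to the sum of $d(k,\bar G)$ and a contribution from the cyclic step.

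\textbf{Main obstacle.} The critical technical point is ensuring that the exponent of $x$ stabilizes at exactly $a(G)$ through the induction: one must show that the cyclic $\ell$-extensions of $\hat L$ giving the minimal discriminant contribution correspond precisely to $G$-conjugacy classes realizing $\ind(G)$. This requires a careful analysis of how the Artin conductors of characters inflated from $\bar G$ interact with those associated to $Z$, so that the minimum conductor-discriminant weight in the convolution matches $\ind(G)^{-1} = a(G)$ rather than something larger. A secondary issue is uniform control of the joint-ramification correction in the Sylow step, and verifying $d(k,G) \ge b(k,G)$; the latter is automatic since each $k$-conjugacy class of minimal index contributes a pole of order at least one to the dominant Dirichlet series, and pole orders only accumulate as the induction unwinds.
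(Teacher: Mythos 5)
Your proposal shares the paper's high-level intuition (peel off a central cyclic quotient of prime order; reduce to $\ell$-groups via the Sylow decomposition), but the core counting mechanism is different and has a genuine gap.

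The critical problem is in your inductive step. You fix a $\bar G$-extension $L/k$, pass to its normal closure $\hat L$, and propose to count the $C_\ell$-extensions of $\hat L$ using Wright's theorem over $\hat L$. But $\hat L$ varies over all $\bar G$-extensions of $k$, and Wright's asymptotic constant $c(\hat L, C_\ell)$ depends on $\hat L$ through its class number, regulator, discriminant, etc., in a way that is not uniformly controlled. To close the induction you would need to bound $\sum_{L} c(\hat L, C_\ell) \cdot (\text{something})$ with the sum taken over a family of fields of unbounded discriminant, and your proposal gives no mechanism for this. This is not a secondary technicality; it is the central difficulty, and the ``Dirichlet series convolution'' you invoke does not address it because the coefficients in the inner sum are themselves uncontrolled.

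The paper avoids this obstacle by two structural choices that have no counterpart in your argument. First, it does not count cyclic extensions of the varying field $\hat L$ at all: the content of the embedding-theoretic Theorems \ref{satzzen} and \ref{satzzen2} is that, for a \emph{central} embedding problem with kernel $C_\ell$, the solution fields are parametrized by $C_\ell$-extensions of the \emph{fixed ground field} $k$ (this uses the fiber-product isomorphism $G \times_H G \cong C_\ell \times G$). Second, rather than fixing a quotient extension and counting above it, the paper fixes the combinatorial data of ramification — the tuple of squarefree ideals $(\ida_1,\dots,\ida_r)$ recording which primes first ramify at each step of the central series — and bounds the size of the fiber of the map $K \mapsto (\ida_1,\dots,\ida_r)$ uniformly in terms of $k$, the primes $\ell_i$, and the numbers $\omega(\ida_i)$ of prime divisors (Lemma \ref{lbound} and Theorem \ref{bound_exact}, via ray class field bounds). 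The final sum over ideal tuples is then handled by the Tauberian estimate of Lemma \ref{logbound2}. So the argument is not an induction on $|G|$ but a single pass through a refinement of the upper central series, and the uniformity of the class-field-theoretic fiber bound is precisely what replaces the per-field constants you would need. Your remark that $d(k,G)\ge b(k,G)$ follows ``automatically from pole accumulation'' is also not an argument; the paper establishes it combinatorially (Remark \ref{rem:d}) by comparing $d(G)$ to the number of elements of minimal index.
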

For a discussion of the constant $d(k,G)$ we refer to the discussion starting at Remark \ref{rem:d}. We show in Lemma \ref{nil} that $d(k,G)=b(k,G)$ for all nilpotent groups for which all elements of minimal index lie in the center. This trivially includes the abelian groups and the generalized quaternion groups. All elements of minimal index are of prime order $\ell$. We remark that $d(k,G)$ is only depending on the Sylow $\ell$-subgroup of $G$ and therefore we can construct many more examples as direct products of $p$-groups, for which our method yields the conjectured bound.  
 
%
 These upper bounds are valid for all transitive nilpotent groups and we think that the proof of the results are much easier compared to the proofs given in \cite{KlMa2}.

We mention that there is some active research about these conjectures and we refer the reader to the
survey articles \cite{Bel,CoDiOl4, Wood}.

\section{Analytic tools}

In this section we develop some tools for the asymptotic behavior of
sums corresponding to Euler products.  In principle these tools are
known, but it is very difficult to find good references.  \cite[Lemma
5.4 and 5.5]{Sau} consider Euler products of the form
$Z(s) = \prod_p W(p,p^{-s})$, where $W\in\Z[x,y]$ is a polynomial with constant term
$1$. We only need the classical case that the $x$-degree of $W$ is $0$.
They prove the area of convergence and that there is a meromorphic continuation to
the left. Unfortunately, they do not give the order of the pole. Knowing these
properties is necessary to get the asymptotic behavior of the corresponding series.

We give a version of a Tauberian theorem which is useful for our applications.
This version is based on \cite{Del} and also proved in \cite[page 121]{Nar2}.

\begin{theorem}[Tauberian theorem of Delange]\label{tauber}
  Let
$$f(s) = \sum_{n=1}^{\infty} \frac{a_n}{n^s} $$ be a convergent Dirichlet series for $s\in\C$ with $\Re(s)>a>0$ with 
real $a_n\geq 0$. Furthermore, we assume in the area of convergence for a positive $z\in\R$:
$$f(s) = \frac{g(s)}{(s-a)^z}+h(s),$$
where $g$ and $h$ are analytic functions in the half plane
$\Re(s)\geq a$ with $g(a)\ne 0$.
Then we get for  $x\rightarrow \infty$:
$$ \sum_{n\leq x} a_n =\left(\frac{g(a)}{a\Gamma(z)}+o(1)\right)x^a\log(x)^{z-1},$$
where $\Gamma$ denotes the  Gamma--function. This is equivalent to
$$\sum_{n\leq x} a_n \sim \frac{g(a)}{a\Gamma(z)} x^a\log(x)^{z-1}.$$
\end{theorem}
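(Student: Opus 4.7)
The plan is to follow the standard complex-analytic Tauberian route used by Delange, in which the summatory function of the $a_n$ is compared to a contour integral of $f(s)\, x^s / s$ and the singular behaviour at $s=a$ is extracted.

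First I would apply the truncated Perron formula: for any $c > a$ and $T \ge 1$,
\[
\sum_{n \le x} a_n = \frac{1}{2\pi i} \int_{c-iT}^{c+iT} f(s)\, \frac{x^s}{s}\, ds + E(x,T),
\]
with an explicit error $E(x,T)$ controlled by the growth of the partial sums. Since $a_n \ge 0$ and the series converges for $\Re(s) > a$, standard upper bounds make the contribution of $E(x,T)$ negligible relative to the target main term once $T$ is chosen as a suitable power of $x$.

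Second, I would deform the vertical line $\Re(s) = c$ into a Hankel-type contour $\mathcal{H}$ going from $c-iT$ leftward, around the branch point $s=a$ of $(s-a)^{-z}$ (with branch cut along the real axis to the left of $a$), and back to $c+iT$. Writing $f(s) = g(s)/(s-a)^z + h(s)$ splits the integral. The $h$-part is of lower order because $h$ is analytic on $\Re(s) \ge a$, so the contour sweeps across no singularity and the vertical tails can be bounded away. The $g$-part is evaluated via the substitution $s = a + u/\log(x)$, which converts $\mathcal{H}$ into a Hankel contour for $1/\Gamma$ and yields
\[
\frac{1}{2\pi i} \int_{\mathcal{H}} \frac{g(s)\, x^s}{(s-a)^z\, s}\, ds = \frac{g(a)}{a\, \Gamma(z)}\, x^a \log(x)^{z-1} \bigl(1+o(1)\bigr),
\]
by continuity of $g$ at $a$ together with the hypothesis $g(a)\ne 0$. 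The second form of the conclusion is immediate, since it is just the first rewritten in $\sim$-notation.

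The main obstacle is justifying the contour shift, since the statement only assumes analyticity of $g$ and $h$ on the closed half-plane $\Re(s) \ge a$ without an \emph{a priori} polynomial growth bound on vertical lines. Here the positivity of the $a_n$ is essential: Landau's theorem forces $s=a$ to be a genuine singularity, and the Delange--Ikehara type smoothing (replacing the summatory function by a suitably averaged version before inverting) supplies the growth control needed to push the contour past $\Re(s) = a$ and to let $T \to \infty$. Making this step rigorous under only the hypotheses stated is the delicate part of the argument; without $a_n \ge 0$ the conclusion can fail.
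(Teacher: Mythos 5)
The paper does not prove Theorem \ref{tauber}; it is quoted from Delange \cite{Del} and from Narkiewicz \cite[p.~121]{Nar2}, so there is no in-paper argument to compare against. Taken on its own terms, your proposal correctly identifies where the constant comes from: a Hankel contour around the branch point $s=a$ together with the change of variable $s=a+u/\log x$ does reproduce the Hankel integral representation for $1/\Gamma(z)$, so the shape of the main term $g(a)x^a\log(x)^{z-1}/(a\Gamma(z))$ is derived correctly. The gap is in the first two steps. The truncated Perron formula and the subsequent deformation of the vertical line $\Re(s)=c$ onto a Hankel contour both require quantitative growth estimates for $f$ (equivalently, for $g$ and $h$) on the horizontal segments $\Im(s)=\pm T$ and on vertical lines near $\Re(s)=a$; the hypotheses supply only analyticity on the closed half-plane $\Re(s)\ge a$, with no size bound whatsoever. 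Landau's theorem, which you invoke, only says that $s=a$ must be a singular point of $f$ when $a_n\ge 0$; it gives no control on $|f(\sigma+it)|$ as $|t|\to\infty$ and therefore cannot justify truncating or shifting the contour.

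Your closing remark that ``Delange--Ikehara type smoothing'' supplies the missing control is the right instinct but mischaracterizes its role: it is not a patch on the Perron argument, it is a replacement for it. Delange's proof, and the one in \cite{Nar2}, is a Wiener--Ikehara--style Tauberian argument: one convolves the summatory function against a Fej\'er-type kernel, expresses the smoothed sum as an integral of $f$ along the critical line $\Re(s)=a$, and uses positivity of the $a_n$ together with the Riemann--Lebesgue lemma to pass to the limit --- no contour is ever pushed past or wrapped around $s=a$. To make a genuine contour proof close you would need either an extra hypothesis (polynomial growth of $g,h$ on vertical lines, as in Tenenbaum-style effective Perron) or a Newman--Korevaar finite-contour argument, which is a structurally different computation from the Perron-plus-Hankel deformation you sketch. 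As written, then, the two strategies are conflated and the crucial justification for moving the contour is missing.
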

We remark that \cite{Nar2} also proves a version where we get
$\log(\log(x))$-factors in the asymptotics. These cases do not occur
in this work. As an example the Riemann $\zeta$-function can be
written as $\zeta(s)=1/(s-1)+h(s)$, where $h(s)$ is analytic for
$\Re(s)>0$. In this special case the asymptotic behavior can be proved
without using the Tauberian theorem.
We need a similar result for number fields and we also want to restrict to
prime ideals with certain congruence properties.
Before we state the theorem we need the following lemma. In the following we use the notation $\PP$ for the prime numbers and the notation $\PP(k)$ for the (finite) prime ideals of the maximal order $\OO_k$ of $k$.
\begin{lemma}\label{euler2}
  Let $k$ be a number field, $\ell\in\PP$,
  $U:=\{\idp\in \PP(k)\mid \Norm(\idp)\equiv 0 \text{ \rm or }1 \bmod \ell\}$, and
  $\zeta_\ell$ be an $\ell$-th root of unity. For $d,m\in\N$ we define for $s\in\C$ with $\Re(s)>1/d$:
$$f(s):=\prod_{\idp\in U}\left(1+\frac{m}{\Norm(\idp)^{ds}}\right) = \sum_{n=1}^\infty \frac{a_n}{n^s}.$$
  Then we get with $e:=\frac{m}{[k(\zeta_\ell):k]}$ and a constant
  $c(k,\ell,d,m)>0$ for  $x\rightarrow \infty$:
$$\sum_{n\leq x} a_n \sim c(k,\ell,d,m)  x^{1/d} \log(x)^{e-1}.$$
\end{lemma}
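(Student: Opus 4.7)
The approach is to apply the Delange Tauberian theorem (Theorem~\ref{tauber}) with $a=1/d$ and $z=e$. The coefficients $a_n$ are nonnegative by Euler-product expansion, and $f(s)$ converges absolutely on $\Re(s)>1/d$ by comparison with $\zeta_k(ds)$, so what remains is to write $f$ as $g(s)(s-1/d)^{-e}$ with $g$ analytic and nonzero on some open neighbourhood of the closed half-plane $\Re(s)\ge 1/d$.

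The first step is to take logarithms, yielding
$$\log f(s) = m\sum_{\idp\in U}\Norm(\idp)^{-ds}+H(s),$$
where $H$ is analytic on $\Re(s)>1/(2d)$ because the $j$-th Taylor term of $\log(1+m\Norm(\idp)^{-ds})$ is $O(\Norm(\idp)^{-jds})$ for $j\ge 2$. The conceptual heart of the argument is then to identify $U$. The finitely many primes above $\ell$ contribute only an analytic summand, and for $\idp\nmid\ell$ the condition $\Norm(\idp)\equiv 1\pmod{\ell}$ is equivalent to $\OO_k/\idp$ containing $\zeta_\ell$, i.e.\ to $\idp$ splitting completely in $L:=k(\zeta_\ell)$. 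Writing $r:=[L:k]$, each such $\idp$ has $r$ primes of $L$ above it, all of norm $\Norm(\idp)$, while primes of $L$ with residue degree $\ge 2$ over $k$ contribute a sum dominated by $r\sum_{\idp\in\PP(k)}\Norm(\idp)^{-2ds}$, which is analytic on $\Re(s)>1/(2d)$. This yields
$$\sum_{\idp\in U}\Norm(\idp)^{-ds}=\frac{1}{r}\sum_{\idP\in\PP(L)}\Norm(\idP)^{-ds}+A_1(s)$$
with $A_1$ analytic on a neighbourhood of $\Re(s)\ge 1/d$.

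The Dedekind zeta function $\zeta_L$ now enters: it extends meromorphically to $\C$ with a unique simple pole at $s=1$ and is non-vanishing on $\Re(s)\ge 1$, so $\log\zeta_L(s)+\log(s-1)$ is analytic on a neighbourhood of $\Re(s)\ge 1$. Subtracting the absolutely convergent $j\ge 2$ Euler-log tail and substituting $s\mapsto ds$ gives $\sum_{\idP\in\PP(L)}\Norm(\idP)^{-ds}=-\log(s-1/d)+A_2(s)$ with $A_2$ analytic on a neighbourhood of $\Re(s)\ge 1/d$. Combining the displays produces $\log f(s)=-e\log(s-1/d)+A(s)$ on this neighbourhood, with $A:=H+mA_1+(m/r)A_2$ analytic. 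Setting $g:=\exp(A)$ gives the required factorisation of $f$, and Delange's theorem then delivers the asymptotic with $c(k,\ell,d,m)=dg(1/d)/\Gamma(e)$; positivity follows because $A(1/d)$ is real, as the whole Dirichlet series has real coefficients.

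The main substantive input is the classical meromorphic continuation and line-$\Re(s)=1$ non-vanishing of $\zeta_L$; everything else is bookkeeping (ramified primes, higher residue degrees, Taylor tail). A minor subtlety is that $e=m/r$ need not be an integer, but Theorem~\ref{tauber} is stated for arbitrary positive real $z$, so this causes no difficulty.
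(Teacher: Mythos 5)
Your proof is correct, and it rests on the same essential idea as the paper's: the primes in $U$ with $\Norm(\idp)\equiv 1\pmod\ell$ are precisely those splitting completely in $k(\zeta_\ell)$, and this lets you trade $m$ factors over $\idp$ for $e=m/[k(\zeta_\ell):k]$ factors over the primes of $k(\zeta_\ell)$, reducing everything to the simple pole of $\zeta_{k(\zeta_\ell)}$. Where you differ is purely in the mechanics of isolating that pole: the paper multiplies $1+m\Norm(\idp)^{-ds}$ by $(1-\Norm(\idp)^{-ds})^{m}$ to kill the linear term and thereby factors $f(s)$ as $\zeta_{k(\zeta_\ell)}(ds)^e$ times Euler products convergent at $\Re(s)=1/d$, whereas you pass to $\log f$ and extract the $-e\log(s-1/d)$ singularity directly. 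The two are interchangeable here; your route has the advantage of making explicit the reliance on $\zeta_{k(\zeta_\ell)}$ being non-vanishing on $\Re(s)=1$ (needed to define $(\cdot)^e$ for non-integer $e$, and to apply Delange), a point the paper's proof uses only implicitly. One small thing worth keeping in mind when writing it out: the expansion $\log(1+m\Norm(\idp)^{-ds})$ only converges when $m\Norm(\idp)^{-d\Re(s)}<1$, so the finitely many $\idp$ of small norm should be split off as a separate analytic factor before logarithms are taken on a neighbourhood of $\Re(s)\ge 1/d$; this is routine but should be said.
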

\begin{proof}
  We have the following identity for all $\idp\in U$:
  $$(1+m\Norm(\idp)^{-ds})(1-\Norm(\idp)^{-ds})^{m}
  =1-\binom{m+1}{2}\Norm(\idp)^{-2ds}+\cdots - m\Norm(\idp)^{-(m+1)d
    s}.$$ We get with an easy computation or by \cite[Lemma 5.4]{Sau} that
  $$g(s):=\prod_{\idp\in U}
  \left(1-\binom{m+1}{2}\Norm(\idp)^{-2ds}+\cdots -
    m\Norm(\idp)^{-(m+1)d s}\right)$$ is a convergent product for
  $\Re(s)\ge 1/d$.
  We have
  $$f(s)=\prod_{\idp\in U} \left(1+\frac{m}{\Norm(\idp)^{ds}}\right) = g(s) \prod_{\idp\in U} (1-\Norm(\idp)^{-ds})^{-m}.$$
  Now we write $U=U_0 \dot\cup U_1$, where $U_i$ contains the prime ideals which are congruent to $i \mod \ell$. Note that $U_0$ is a finite set and therefore 
  \[ g_0(s):=\prod_{\idp\in U_0} (1-\Norm(\idp)^{-ds})^{-m} \]
  is a convergent product for
  $\Re(s)\ge 1/d$.
  
  Let $n_\ell:=[k(\zeta_\ell):k]$, a divisor of $\ell-1$. All prime
  ideals in $U_1$ are totally split in $k(\zeta_\ell)$. Therefore we get
  for $\idp\in U_1$ the factorization
  $\idp\OO_{k(\zeta_\ell)}=\idP_1\cdots\idP_{n_\ell}$.  Since
  $\Norm(\idp)=\Norm(\idP_i)$ for $i=1,\ldots,n_{\ell}$ and using
  $e:=m/n_\ell$ we get for all $\idp\in U_1$:
  $$ (1-\Norm(\idp)^{-s})^{-m}= \prod_{\idP\mid \idp}(1-\Norm(\idP)^{-s})^{-e}.$$ 
  
  Let $\tilde U_1\subset\PP(k(\zeta_\ell))$ be the set of prime ideals lying
  over prime ideals of $U_1$. We get:
  $$\prod_{\idp\in U_1} (1-\Norm(\idp)^{-ds})^{-m} = \prod_{\idP\in \tilde U_1}(1-\Norm(\idP)^{-ds})^{-e} = \zeta_{k(\zeta_\ell)}(ds)^e\prod_{\idP\notin \tilde U_1} (1-\Norm(\idP)^{-ds})^e.$$
  Altogether we have for $s\in\C$ with $\Re(s)>1/d$:
  \begin{equation}\label{eq:tauber}
  f(s)=\prod_{\idp\in U}\left(1+\frac{m}{\Norm(\idp)^{ds}}\right) = g(s)g_0(s) \zeta_{k(\zeta_\ell)}(ds)^e\prod_{\idP\notin \tilde U_1} (1-\Norm(\idP)^{-ds})^e.
  \end{equation}

  All prime ideals in $\PP(k)$ not lying in $\tilde U_1$, lie over $\ell$, are ramified
  or have an inertia degree which is at least 2.  Since there are only
  finitely many prime ideals which are ramified or which lie over
  $\ell$, this Euler product is convergent for $s=1/d$ and
  has a value between 0 and 1.  Now we study the behavior of $\zeta_{k(\zeta_\ell)}(ds)^e$. Using the fact that the Dedekind $\zeta$-function has a simple pole at $s=1$ we choose $a:=1/d$ and $z:=e$  and we apply the
  Tauberian Theorem \ref{tauber} to get the desired result.
\end{proof}

The following sums will occur in our applications. We remark that the $B_i$ come from the fact that in a cyclic extension
of degree $\ell$ only prime ideals above $\ell$ and prime ideals congruent to 1 modulo $\ell$ can ramify.  Therefore by choosing $\ell_i=2$ we get the asymptotics without congruence condition. We see that the congruence condition changes the $\log$-part of the asymptotics. For the rest of the paper we denote by $\omega(\ida)$ the number of different prime ideal factors of $\ida$.

\begin{lemma}\label{logbound2}
  Let $k$ be a number field, $m_1,\ldots,m_r, d_1,\ldots,d_r\in\N$, and $\ell_1,\ldots,\ell_r\in\PP$. We define $n_{\ell_i}:=[k(\zeta_{\ell_i}):k]$,  $d:=\min(d_1,\ldots,d_r)$,
  $e:=\sum\limits_{i: d_i=d}\frac{m_i}{n_{\ell_i}}$, and
  \[B_i:=\{\idp\in\PP(k) \mid \Norm(\idp)\equiv 0,1 \bmod \ell_i\}.\] Then we get for a suitable constant $c>0$:
  $$\sum_{\Norm(\ida_1)^{d_1}\cdots\Norm(\ida_r)^{d_r}\leq x}
  m_1^{\omega(\ida_1)}\cdots m_r^{\omega(\ida_r)} \tilde c x^{1/d} \log(x)^{e-1},$$
    where the sum is over all squarefree ideals $\ida_i$ such that $\idp\mid \ida_i \Rightarrow \idp\in B_i$.
\end{lemma}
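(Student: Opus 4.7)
The plan is to turn the sum into a Dirichlet series, exploit multiplicativity to factor it into $r$ Euler products of the type handled in Lemma \ref{euler2}, and then feed the resulting pole structure into the Tauberian Theorem \ref{tauber} of Delange.

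Concretely, I would introduce
$$f(s):=\sum \frac{m_1^{\omega(\ida_1)}\cdots m_r^{\omega(\ida_r)}}{\Norm(\ida_1)^{d_1 s}\cdots \Norm(\ida_r)^{d_r s}} \;=\; \prod_{i=1}^r f_i(s),$$
where the outer sum runs over all $r$-tuples of squarefree ideals $\ida_i$ with prime divisors in $B_i$, and
$$f_i(s):=\sum_{\substack{\ida_i\text{ squarefree}\\ \idp\mid\ida_i\Rightarrow\idp\in B_i}} \frac{m_i^{\omega(\ida_i)}}{\Norm(\ida_i)^{d_i s}} \;=\; \prod_{\idp\in B_i}\Bigl(1+\frac{m_i}{\Norm(\idp)^{d_i s}}\Bigr).$$
The product decomposition is immediate because the tuple entries vary independently and squarefreeness turns the inner sum into an Euler product with exactly two terms per prime.

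Lemma \ref{euler2} (applied to each $f_i$ with the parameters $d_i$, $m_i$, $\ell_i$) supplies the key analytic continuation: each $f_i$ can be written as $\zeta_{k(\zeta_{\ell_i})}(d_i s)^{m_i/n_{\ell_i}}$ times a product that converges absolutely on the closed half-plane $\Re(s)\ge 1/d_i$. Setting $a:=1/d$, the factors with $d_i=d$ contribute a pole at $s=a$ of order $m_i/n_{\ell_i}$, and these orders add up to $e=\sum_{d_i=d} m_i/n_{\ell_i}$. The factors with $d_i>d$ satisfy $1/d_i<a$, so $f_i$ is analytic and nonzero at $s=a$. Multiplying everything yields a representation
$$f(s) \;=\; \frac{g(s)}{(s-1/d)^e} + h(s),$$
with $g,h$ analytic in a neighborhood of $\Re(s)\ge 1/d$ and $g(1/d)\neq 0$.

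Because the Dirichlet coefficients of $f$, namely $a_n=\sum m_1^{\omega(\ida_1)}\cdots m_r^{\omega(\ida_r)}$ grouped by $n=\Norm(\ida_1)^{d_1}\cdots \Norm(\ida_r)^{d_r}$, are nonnegative, Theorem \ref{tauber} applies with $a=1/d$ and $z=e$ and delivers the claimed asymptotic $\sum_{n\le x} a_n \sim \tilde c\, x^{1/d}\log(x)^{e-1}$ with $\tilde c=d\,g(1/d)/\Gamma(e)$. The main technical obstacle is bookkeeping for the analytic continuation: one has to make sure that each residual Euler product $\prod_{\idP\notin \tilde U_1}(1-\Norm(\idP)^{-d_i s})^{m_i/n_{\ell_i}}$ coming from Lemma \ref{euler2} remains analytic and nonvanishing on the closed half-plane $\Re(s)\ge 1/d$, and that combining the $r$ such factorizations does not accidentally cancel the pole. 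Both points follow from the construction in the proof of Lemma \ref{euler2}, since only finitely many primes in $k(\zeta_{\ell_i})$ fail to be completely split over $B_i$ and the remaining Euler factors are bounded away from $0$.
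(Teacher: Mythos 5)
Your proposal is correct and follows essentially the same route as the paper: factor the Dirichlet series as $\prod_i F_i(s)$, invoke Lemma~\ref{euler2} for each factor, note that the factors with $d_i>d$ are holomorphic and nonvanishing at $s=1/d$ while those with $d_i=d$ contribute the pole orders summing to $e$, and finish with Theorem~\ref{tauber}. The paper's proof is just a terser version of exactly this argument.
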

\begin{proof}
  The corresponding Dirichlet series $F(s)$ can be written as a product
  $$F(s) = \prod_{i=1}^r F_i(s), \mbox{ where }
  F_i(s)=\prod_{\idp\in B_i} \left( 1+ \frac{m_i}{\Norm(\idp)^{d_i s}}\right).$$
  Using Lemma \ref{euler2} we see that $F_i$ are convergent for $\Re(s)>1/d_i$ with corresponding $z_i:=m_i/n_{\ell_i}$. The functions with $d_i>d$ are convergent Euler products at $s=1/d$ and therefore holomorphic with $F_i(1/d)\ne 0$. For the application of Theorem \ref{tauber} we define $a:=1/d$ and $z$ to be the sum of the $z_i$ and note that $z=e$.
   Altogether,
  $F(s) = \zeta_{k(\zeta_\ell)}(ds)^e G(s)$, where $G(s)$ is holomorphic at $s=1/d$ and $G(1/d)\ne 0$.
\end{proof}

\section{Class group bounds}
We denote by $\rk_\ell(\Cl_k):=\dim_{\F_\ell}(\Cl_k/\Cl_k^\ell)$ the $\ell$-rank of the class group $\Cl_k$ of the maximal order $\OO_k$ of $k$ and by $r_1$ the number of real places of $k$.
\begin{lemma} \label{lbound}
  Let $k$ be a number field, $\ell\in\PP$, and $S\subseteq \PP(k)$ be a finite set of prime ideals.  Then there
  are at most $\frac{\ell^s-1}{\ell-1}$ $C_{\ell}$-extensions of $k$ which
  are unramified outside $S$, where
  $$s=\begin{cases} \rk_\ell(\Cl\nolimits_k)+|S| +[k:\Q]& \ell>2.\\
                      \rk_\ell(\Cl\nolimits_k)+|S|+[k:\Q] +r_1& \ell=2. 
   \end{cases}$$
\end{lemma}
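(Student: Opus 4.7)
The plan is to realize every $C_\ell$-extension of $k$ unramified outside $S$ as a quotient of a single ray class group and then to bound the $\ell$-rank of that group. I choose a modulus $\mathfrak{m}=\mathfrak{m}_f\mathfrak{m}_\infty$ whose finite part $\mathfrak{m}_f$ is supported on $S$, taking exponent $1$ at each $\idp\in S$ with $\idp\nmid\ell$ and a large enough exponent at each $\idp\in S$ with $\idp\mid\ell$; when $\ell=2$ I let $\mathfrak{m}_\infty$ comprise every real place of $k$. Since the conductor exponent of any cyclic degree-$\ell$ extension at a prime $\idp$ is bounded by a quantity depending only on $k_\idp$ and $\ell$, this choice guarantees that every $C_\ell$-extension of $k$ unramified outside $S$ has conductor dividing $\mathfrak{m}$. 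Such extensions therefore correspond bijectively to surjections $\Cl_k(\mathfrak{m})\twoheadrightarrow C_\ell$ modulo $\Aut(C_\ell)$, and so there are at most $(\ell^s-1)/(\ell-1)$ of them, where $s:=\rk_\ell(\Cl_k(\mathfrak{m}))$.

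The core step is to bound $s$. From the standard exact sequence
$$\OO_k^\times\longrightarrow(\OO_k/\mathfrak{m}_f)^\times\times\{\pm1\}^{r}\longrightarrow\Cl_k(\mathfrak{m})\longrightarrow\Cl_k\longrightarrow 1,$$
where $r=r_1$ if $\ell=2$ and $r=0$ otherwise, I obtain $s\le\rk_\ell\Cl_k+\rk_\ell(\OO_k/\mathfrak{m}_f)^\times+r$. Decomposing $(\OO_k/\mathfrak{m}_f)^\times$ as a product over the prime divisors of $\mathfrak{m}_f$, each factor at $\idp\in S$ with $\idp\nmid\ell$ is cyclic of order $\Norm(\idp)-1$ and contributes $\ell$-rank at most $1$. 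For $\idp\in S$ with $\idp\mid\ell$, the local unit structure $\OO_{k_\idp}^\times\cong\mu(k_\idp)\times\Z_\ell^{e_\idp f_\idp}$ implies that $\rk_\ell(\OO_k/\idp^{c_\idp})^\times\le e_\idp f_\idp+1$ once $c_\idp$ is taken large enough.

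Summing these local contributions, the tame primes contribute at most $|S\setminus\{\idp\mid\ell\}|$, while the wild primes contribute at most $|S\cap\{\idp\mid\ell\}|+\sum_{\idp\in S,\,\idp\mid\ell}e_\idp f_\idp$. Combining yields
$$\rk_\ell(\OO_k/\mathfrak{m}_f)^\times\;\le\;|S|+\sum_{\idp\mid\ell}e_\idp f_\idp\;\le\;|S|+[k:\Q],$$
and substituting produces the claimed bound. The argument is routine class field theory; the only mildly delicate point is the bookkeeping that absorbs the local ``$+1$'' terms at wildly ramified primes into the $|S|$ summand rather than creating an extra error term.
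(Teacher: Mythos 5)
Your proof is correct and takes essentially the same approach as the paper's: both realize all $C_\ell$-extensions unramified outside $S$ inside a single ray class field of conductor supported on $S$, bound the $\ell$-rank of $(\OO_k/\mathfrak{m}_f)^\times$ by splitting into tame primes (rank $\le 1$ each) and primes above $\ell$ (rank $\le [k_\idp:\Q_\ell]+1$ via the local unit structure), and sum using $\sum_{\idp\mid\ell}[k_\idp:\Q_\ell]=[k:\Q]$. The only cosmetic difference is that you spell out the standard ray-class exact sequence with the unit group on the left, while the paper quotes Lang's form $A/\Cl_k\cong(k(\idc)/k_\idc)/(U/U_\idc)$; the underlying bookkeeping is identical.
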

\begin{proof} 
	The idea of the proof is to find an ideal $\ida$ such that all wanted $C_\ell$-extensions are a subfield of the ray class field of $\ida$ and the at most $r_1$ real places of $k$. Certainly, we can define $\ida:=\prod_{\idp \in S} \idp^{a_\idp}$ for a suitable choice of the numbers $a_\idp\geq 1$. Denote the ray class group of $\ida$ and the infinite places by $A$. We would like to analyze the group $A/\Cl_k$ and 
  we use the notation of \cite{Lang}, pp.~123. Let $\idc$ be the cycle which is divisible by all real places and has finite part $\idc_0:=\ida$. We get \cite[p.~126]{Lang}:
  \[ A/\Cl\nolimits_k \cong (k(\idc)/k_{\idc})\;/\;(U/U_\idc).\]
  We remark that 
  \[k(\idc)/k_{\idc} \cong (\OO_k/\fa)^\times \times C_2^{r_1}\]
  in our notation, where the latter part is coming from the $r_1$ real places.
   
  We get that the $\ell$-rank of $A$ is at most $\rk_\ell(\OO_k/\fa)^\times+\rk_\ell(\Cl_k) + r_1$. This explains the $\rk_\ell(\Cl_k)$ and the $r_1$-part of the formula in the claim. We define $S_1:=\{\idp \in S \mid \ell\notin\idp \}$.
   Then by the Chinese remainder theorem we can write:
  $$(\OO_k/\fa)^\times \cong \prod_{\idp \in S} (\OO_k/\idp^{a_\idp})^\times = \prod_{\idp \in S_1} (\OO_k/\idp^{a_\idp})^\times \times
  \prod_{\idp \in S\setminus S_1} (\OO_k/\idp^{a_\idp})^\times.$$
  Therefore we have to study $(\OO_k/\idp^{a_\idp})^\times$ which is isomorphic to
  \begin{equation}\label{eq:rk_ray}
   (\OO_{k_\idp}/\idp^{a_\idp})^\times \cong (\OO_{k_\idp}/\idp)^\times \times (1+\idp)/(1+\idp^{a_\idp}),
  \end{equation}
  where we denote the unique prime ideal of $\OO_{k_\idp}$ by $\idp$ as well. The group $(\OO_{k_\idp}/\idp)^\times$ has size
  $\Norm(\idp)-1$ and $(1+\idp)/(1+\idp^{a_\idp})$ is a $p$-group, where $p\in \idp \cap \PP$.

  For $\idp\in S_1$ we have $p\ne\ell$ and  $\rk_\ell((\OO_k/\idp^{a_\idp})^\times) \leq 1$, since
  $(\OO_{k_\idp}/\idp)^\times$ is a cyclic group of order $\Norm(\idp)-1$ and the order of the second factor of \eqref{eq:rk_ray} is coprime to $\ell$.
  
  For $\idp\in S \setminus S_1$ we have $p=\ell$. The order of the first group is coprime to $\ell$.
  Using \cite[Theorem 5.21]{Nar} we get that $\rk_\ell((1+\idp)/(1+\idp^{a_\idp}))\leq [k_\idp:\Q_\ell]+1$. Since
  $[k:\Q] = \sum_{\idp\mid (\ell)} [k_\idp:\Q_\ell]$ we finish the proof.
\end{proof}
It was not necessary in the above proof to compute the numbers $a_\idp$.
We remark that for prime ideals $\idp$ in $S_1$  we can choose $a_{\idp}=1$, since bigger exponents do not change the $\ell$-rank of \eqref{eq:rk_ray}. The difficult part of the above lemma was to prove the upper bound for prime ideals lying over $\ell$. If not all prime ideals above $\ell$ are contained in $S$ we can improve the $[k:\Q]$-part in the above lemma. In case $S=S_1$ we can omit this part completely.

In the following we also need a good upper bound for the number of
$C_\ell$-extensions which are exactly ramified in prime ideals in $S$.
asymptotics.
  \begin{theorem}\label{bound_exact}
    Let $k$ be a number field, $\ell\in \PP$, and
    $S,T\subseteq \PP(k)$ be finite sets of prime ideals of $\OO_k$
    with empty intersection.  Then there exists a constant $c(k,\ell)$
    such that the number of $C_\ell$-extensions of $k$ which are ramified in
    all prime ideals of $S$ and are unramified outside of $S\cup T$ is
    bounded above by
    $$\ell^{c(k,\ell)+|T|} (\ell-1)^{|S|}.$$
  \end{theorem}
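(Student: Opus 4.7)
The plan is to recast the counting problem as linear algebra over $\F_\ell$. Let $W$ be the $\F_\ell$-vector space of continuous homomorphisms $\varphi\colon G_k\to\F_\ell$ which are unramified outside $S\cup T$, so that nontrivial $C_\ell$-extensions of $k$ unramified outside $S\cup T$ correspond bijectively to the $(|W|-1)/(\ell-1)$ lines in $W$. For each $\idp\in S$, restricting $\varphi$ to an inertia subgroup at $\idp$ gives an $\F_\ell$-linear functional $r_\idp\colon W\to\F_\ell$ whose kernel is precisely the subspace of $\varphi$ unramified at $\idp$. Assembling these produces
\[
   \rho = (r_\idp)_{\idp\in S}\colon W\longrightarrow \F_\ell^{|S|}.
\]

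Next I would identify $\ker\rho$ with the $\F_\ell$-space of homomorphisms $G_k\to\F_\ell$ unramified at every prime of $S$, i.e.~unramified outside $T$. Applying Lemma \ref{lbound} to the set $T$ (and passing from lines to homomorphisms by multiplying by $\ell-1$ and adjoining the trivial homomorphism) yields
\[
   |\ker\rho| \leq \ell^{c(k,\ell)+|T|},
\]
where one may take $c(k,\ell):=\rk_\ell(\Cl_k)+[k:\Q]$, augmented by $r_1$ when $\ell=2$. Crucially, this constant depends only on $k$ and $\ell$.

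Finally, the $C_\ell$-extensions ramified at every prime of $S$ and unramified outside $S\cup T$ correspond to the lines represented by some $\varphi\in W$ with $\rho(\varphi)\in(\F_\ell^\times)^{|S|}$. The preimage has cardinality at most $|\ker\rho|\cdot(\ell-1)^{|S|}$, and dividing by $\ell-1$ to pass from nonzero vectors to lines bounds the number of such extensions by $\ell^{c(k,\ell)+|T|}(\ell-1)^{|S|-1}$, which is even slightly sharper than the claimed bound. There is no genuine obstacle; the only subtle point is verifying that $r_\idp$ is $\F_\ell$-linear with the stated kernel, which follows from the fact that ramification of a $C_\ell$-extension at $\idp$ is detected by a single character of the local inertia group, and this identification is functorial in $\varphi$.
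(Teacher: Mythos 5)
Your Kummer--linear-algebra reformulation is a genuinely cleaner way to organize the count than the paper's own argument, which constructs an explicit generating system $k_1,\ldots,k_t$ of the compositum of $C_\ell$-extensions, uses Abhyankar's lemma to row-reduce that system so that each prime $\idp_i$ of $S$ ramifies in exactly one $k_i$, and then does an inclusion--exclusion sum. Your bounding of the fibre of $\rho$ by $|\ker\rho|$ achieves the same thing without needing to diagonalize: the paper's Abhyankar/inclusion--exclusion step is essentially a hand computation of the right-hand side of your inequality in a special basis. Both proofs then invoke Lemma~\ref{lbound} to bound the unramified-outside-$T$ part. So the architecture is the same; your presentation is more abstract and arguably preferable.

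There is, however, a real gap in the middle. You assert that for every $\idp\in S$ the restriction to inertia gives an $\F_\ell$-\emph{linear functional} $r_\idp\colon W\to\F_\ell$. This is only correct for tamely ramified $\idp$, i.e.~for $\idp\nmid\ell$: there the tame inertia quotient is procyclic, so $\mathrm{Hom}(I_\idp,\F_\ell)$ has dimension at most $1$. For $\idp\mid\ell$ wild ramification enters, and by local class field theory $\mathrm{Hom}(I_\idp,\F_\ell)$ has $\F_\ell$-dimension up to $[k_\idp:\Q_\ell]+1$, so $r_\idp$ is a map into a higher-dimensional space. This breaks the final count: the set $\{\varphi : r_\idp(\varphi)\neq 0\ \forall\idp\in S\}$ is no longer bounded by $|\ker\rho|\cdot(\ell-1)^{|S|}$; the factor $(\ell-1)$ at a wild prime must be replaced by $\ell^{d_\idp}-1>\ell-1$. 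The fix is straightforward and is exactly what the paper does by introducing $S_1:=\{\idp\in S:\ell\notin\idp\}$: either move $S\setminus S_1$ into $T$ (which inflates the exponent by $|S\setminus S_1|\le[k:\Q]$, absorbed into $c(k,\ell)$), or work with the honest target $\bigoplus_\idp\mathrm{Hom}(I_\idp,\F_\ell)$ and bound the contribution of the wild primes by a constant depending only on $[k:\Q]$ and $\ell$. Once you make that adjustment, your bound has the form $\ell^{c'(k,\ell)+|T|}(\ell-1)^{|S_1|-1}$ with $c'(k,\ell)=c(k,\ell)+O([k:\Q])$, which is of the claimed shape.

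One minor point you should make explicit: $W$ is finite, so the counting is legitimate. This follows from Lemma~\ref{lbound} itself (or Hermite--Minkowski), but the bijection between lines in $W$ and $C_\ell$-extensions is only well-behaved once finiteness is in hand.
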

  \begin{proof}
Denote by $M$ the composite of all $C_\ell$-extensions of $k$ which are unramified outside $S\cup T$. Let $\Gal(M/k) \cong C_\ell^t$.
  	    Note that all the wanted $C_\ell$-extensions of the claim are subfields of $M/k$. In the following we would like to construct $C_\ell$-extensions $k_1,\ldots,k_t$ of $k$
    such that the composite $k_1\cdots k_t=M$ and the fields $k_i$ have nice properties.  We define $S_1:=\{\idp \in S \mid \ell\notin\idp \}$. Note that if $\idp$ is unramified in $M/k$ for one $\idp \in S_1$, then there are no $C_\ell$-extensions with the desired properties.

    For this we choose $\idp_1\in S_1$. Let $k_1\leq M$ be one
    $C_\ell$-extension which is ramified in $\idp_1$.  We claim that we can choose $k_2,\ldots,k_t$ in a way such
    that $k_2,\ldots, k_t$ are unramified in $\idp_1$. Certainly, we can
    find extensions $k_2,\ldots,k_t$ such that $M= k_1\cdots k_t$. Now
    assume that $k_i$ is ramified in $\idp_1$ and consider
    $L:=k_1k_i$. By Abhyankar`s lemma \cite[Cor. 4,p. 229]{Nar}, we
    see that $L/k_1$ is unramified over prime ideals lying above
    $\idp_1$ and therefore there exists a subfield (the inertia field)
    $k\leq \tilde k_i \leq L$ such that $\tilde{k}_i/k$ is unramified
    in $\idp_1$. Now we replace $k_i$ by $\tilde{k}_i$.
    Now we choose $\idp_2\in S_1$ such that $\idp_2$ is ramified in at
    least one of the fields $k_2,\ldots,k_t$. Applying the same
    procedure as above we can assume that $\idp_2$ is ramified in
    $k_2$ and unramified in $k_3,\ldots,k_t$. By replacing $k_1$
    with $\tilde k_1$ in $k_1k_2$ we can assume that $\tilde k_1$ is
    still ramified in $\idp_1$ and unramified in $\idp_2$. We continue this
    process as long we can find a prime ideal $\idp_i\in S_1$ which is ramified in at least one of the fields $k_i,\ldots,k_t$. 
    
    When the process is finished we have chosen $r\le |S_1|$ prime ideals $\idp_1,\ldots,\idp_r\in S_1$ and $C_\ell$-extensions   
    $k_1,\ldots,k_t$ with $M=k_1\cdots k_t$ and the property that for $1\leq i \leq r$ the prime ideal $\idp_i$ is ramified in $k_i$ and it is unramified in $k_j$ for $j\ne i$ and $1\leq j \leq t$. Note that it is possible that $r<|S_1|$. This happens for an example if $\idp_1\ne \idp \in S_1$ is only ramified in $k_1$, but not in $k_2,\ldots,k_t$.
    
     Now let $L/k$ be a $C_\ell$-extension which is contained in
    $k_2\cdots k_t$. Then $L/k$ is unramified in $\idp_1$ and
    therefore we need not count $L$. In the same way $L$ is not
    ramified in $\idp_i \;(1\leq i \leq r)$, if $L$ is contained in
    $\prod_{j\ne i} k_j$.  We would like to count the $C_\ell$-extensions
    of $M$ which are not contained in $\prod\limits_{j=1,j\ne i}^t k_j$
    $(1\leq i \leq r)$ since those are not ramified in $\idp_i$.  By
    inclusion-exclusion we get for this number:
    $$\frac{\ell^{t-r}}{\ell-1} \sum_{j=0}^r  (-1)^j \binom{r}{j} (\ell^{r-j}-1)
    =\frac{\ell^{t-r}}{\ell-1} \sum_{j=0}^r  (-1)^j \binom{r}{j} \ell^{r-j} - \frac{\ell^{t-r}}{\ell-1} \sum_{j=0}^r  (-1)^j \binom{r}{j}$$
    $$=\frac{\ell^{t-r}}{\ell-1} (\ell-1)^r - 0 = \frac{(\ell-1)^r\ell^{t-r}}{\ell-1}\leq (\ell-1)^{(|S_1|-1)} \ell^{c(k,\ell)+|T|}.$$
    For the last inequality we used that $k_{r+1}\cdots k_t$ is contained in the maximal elementary abelian $\ell$-extension unramified outside $T \cup S_0$, where $S_0:=\{\idp \in S \mid \ell\in\idp \}$. and we apply
    Lemma \ref{lbound}.
  Note that we can choose $c(k,\ell)=\rk_\ell(\Cl_k)+[k:\Q]+|S_0| +r_1\le \rk_\ell(\Cl_k)+ 3[k:\Q]$. 
  \end{proof}
\section{Finding all solutions of certain embedding problems}\label{embed}

\begin{definition} \label{embeddef}
 Let $K/k$ be a Galois extension of number fields with group $H$ and
 $$1 \longrightarrow U \longrightarrow G 
     \stackrel{\kappa}{\longrightarrow} H \longrightarrow 1$$
 be an exact sequence of finite groups. Denote by $\bar{k}$ an algebraic closure of $k$ containing $K$ and by 
 $\varphi: \Gal(\bar{k}/k) \rightarrow H \cong \Gal(K/k)$ an epimorphism with
 kernel $\Gal(\bar{k}/K)$. This is called the {\it embedding problem given by
 $\kappa$ and $\varphi$}. An epimorphism 
 $\tilde{\varphi}: \Gal(\bar{k}/k) \rightarrow G$ 
 is called {\it (proper) solution} to the embedding problem, if 
 $\kappa\circ \tilde\varphi = \varphi$. The fixed field $L$ of 
 $\ker(\tilde\varphi)$ is called {\it solution field}. Furthermore, the
 embedding problem is called {\it central}, if $U$ is in the center of $G$.
\end{definition}

Suppose that we are given an extension $K/k$ with Galois group $H$ and
we want to find all fields $L$ containing $K$ such that $\Gal(L/k)\cong G$.
Since the embedding problems depend on $\varphi$ and $\kappa$ these can in
general not be obtained as solutions of a single embedding problem.

\begin{lemma} \label{numberembed}
 Let $K/k$ be an extension of number fields with Galois group $H$ and let $G$
 be a finite central extension of $H$. Then all fields $L/K$
 such that $\Gal(L/k)\cong G$ can be found as solutions of finitely many 
 embedding problems, whose number only depends on $G$ and $H$.
\end{lemma}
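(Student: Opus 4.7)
The plan is to parametrize all intermediate fields $L \supseteq K$ with $\Gal(L/k) \cong G$ by embedding problems whose defining data is intrinsic to $G$ and $H$. First, I would fix once and for all an isomorphism $\iota \colon \Gal(K/k) \to H$; composing $\iota$ with the natural restriction $\Gal(\bar k/k) \to \Gal(K/k)$ yields a canonical epimorphism $\varphi \colon \Gal(\bar k/k) \to H$ with kernel $\Gal(\bar k/K)$. This $\varphi$ is independent of $L$ and will serve as the common base-field datum of every embedding problem in our list.

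Next, given any Galois extension $L/k$ containing $K$ with $\Gal(L/k) \cong G$, I would choose an arbitrary isomorphism $\psi \colon \Gal(L/k) \to G$ and let $r \colon \Gal(L/k) \to \Gal(K/k)$ denote the restriction. Setting $\kappa := \iota \circ r \circ \psi^{-1}$ gives a surjection $G \to H$, and writing $\tilde\varphi$ for the composition of $\psi$ with the restriction $\Gal(\bar k/k) \to \Gal(L/k)$, a direct calculation shows $\kappa \circ \tilde\varphi = \iota \circ r \circ \psi^{-1} \circ \psi \circ \text{res} = \varphi$. Thus $L$ is a solution field of the embedding problem specified by $(\varphi, \kappa)$ together with the exact sequence $1 \to \ker(\kappa) \to G \to H \to 1$.

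The key observation is that every such $\kappa$ is a surjective homomorphism between the two fixed finite groups $G$ and $H$, so it lies in the finite set $\mathrm{Epi}(G, H)$. Ranging over this set produces a finite collection of embedding problems (all sharing the common $\varphi$ fixed in step one), and by the previous paragraph every $L$ under consideration arises as a solution field of at least one of them. Since $|\mathrm{Epi}(G, H)|$ depends only on the abstract groups $G$ and $H$, this yields the required bound.

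The main subtlety, and the only real obstacle, is that a given $L$ does not canonically determine $\kappa$: different choices of $\psi$ typically yield different $\kappa$'s, twisted by automorphisms of $G$ and $H$. This ambiguity is harmless for the lemma, however, because we only need existence of some embedding problem in our finite list for which $L$ is a solution, not a bijective correspondence. One could further cut down the list by fixing a representative $\kappa$ in each orbit of $\Aut(G) \times \Aut(H)$ acting on $\mathrm{Epi}(G,H)$, but even the crude bound $|\mathrm{Epi}(G,H)|$ already proves the statement.
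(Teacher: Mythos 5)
Your proof is correct and follows essentially the same approach as the paper: fix $\varphi$ once via the restriction $\Gal(\bar k/k)\to\Gal(K/k)\cong H$, and observe that the finitely many epimorphisms $G\to H$ (which the paper counts by pairing the finitely many normal subgroups with quotient $\cong H$ with the $\Aut(H)$-ambiguity) give the finite list of embedding problems. Your version spells out the verification that any given $L$ is a solution field of one of them, which the paper leaves implicit, but the underlying argument is identical.
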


\begin{proof}
Let $\varphi$ be defined as in Definition \ref{embeddef}. Epimorphisms
$\kappa: G \rightarrow H$ with the same kernel only differ by automorphisms
of $H$. Furthermore, $G$ has only finitely many normal subgroups with quotient
isomorphic to $H$.
\end{proof}

Our goal in this section is to describe all solutions to an embedding problem if we assume that we know one solution.

\begin{definition}\label{def:subdirekt}
  Let $G_1, G_2, H$ be finite groups and 
  $\kappa_i: G_i \rightarrow H \;(i=1,2)$ epimorphisms. Then we define the subdirect product 
  $$G_1 \times_H G_2:=\{(g_1,g_2) \mid g_i\in G_i\;(i=1,2), \kappa_1(g_1)=\kappa_2(g_2)\}$$
with componentwise operation.
\end{definition}
Note that this construction is also called fiber product or pullback. The order of this group
is $|G_1||G_2|/|H|$. Suppose that we have an embedding problem as in Definition \ref{embeddef} and
we have two different solutions $L_1$ and $L_2$ such that $L_1 \cap L_2 =K$. Then $\Gal(L_1L_2/k) =
G\times_H G$.

\begin{lemma}\label{satz:subd}
  Let $G$ be a group extension as in  Definition \ref{embeddef}. 
  We define
  $$\Psi: G \rightarrow \Aut(U), g \mapsto (u \mapsto u^{(g^{-1})} = gug^{-1}).$$ 
  Then
  $$G \times_H G \cong U \rtimes G,$$
  where the semidirect product is defined via $\Psi$.
\end{lemma}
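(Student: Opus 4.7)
The plan is to construct an explicit isomorphism $\Phi: G \times_H G \to U \rtimes G$ and verify that it is bijective and a group homomorphism.

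First I would define
\[
\Phi: G \times_H G \longrightarrow U \rtimes G, \qquad (g_1,g_2) \longmapsto (g_1 g_2^{-1},\, g_2).
\]
For this to make sense I must check that $g_1 g_2^{-1} \in U$ whenever $(g_1,g_2) \in G \times_H G$. This is immediate from the defining condition: $\kappa(g_1) = \kappa(g_2)$ gives $\kappa(g_1 g_2^{-1}) = 1$, so $g_1 g_2^{-1} \in \ker \kappa = U$. Bijectivity is then transparent, since the inverse is $(u,g) \mapsto (ug, g)$, which clearly lies in $G \times_H G$ because $\kappa(ug) = \kappa(u)\kappa(g) = \kappa(g)$.

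The substantive step is checking that $\Phi$ is a homomorphism. Recall that in $U \rtimes_\Psi G$ the multiplication is
\[
(u_1,g_1)(u_2,g_2) = \bigl(u_1 \cdot \Psi(g_1)(u_2),\, g_1 g_2\bigr) = \bigl(u_1 g_1 u_2 g_1^{-1},\, g_1 g_2\bigr).
\]
On the one hand,
\[
\Phi\bigl((g_1,g_2)(h_1,h_2)\bigr) = \Phi(g_1 h_1, g_2 h_2) = \bigl(g_1 h_1 h_2^{-1} g_2^{-1},\, g_2 h_2\bigr).
\]
On the other hand,
\[
\Phi(g_1,g_2)\,\Phi(h_1,h_2) = (g_1 g_2^{-1},\, g_2)(h_1 h_2^{-1},\, h_2) = \bigl(g_1 g_2^{-1} \cdot g_2 (h_1 h_2^{-1}) g_2^{-1},\, g_2 h_2\bigr),
\]
and the $U$-component simplifies to $g_1 h_1 h_2^{-1} g_2^{-1}$, matching the previous expression.

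There is no real obstacle here — the proof is purely formal, and the only thing to keep straight is the convention for the semidirect product multiplication so that the twist by $\Psi$ matches the conjugation in $G$ that appears when one multiplies in the fiber product. A brief order check ($|G \times_H G| = |G|^2/|H| = |U|\cdot|G| = |U \rtimes G|$) confirms that the constructed bijection indeed gives the full isomorphism.
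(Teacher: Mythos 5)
Your proof is correct and takes essentially the same approach as the paper: construct an explicit bijection between $G \times_H G$ and $U \rtimes G$ and verify the homomorphism property by direct computation. The only cosmetic differences are that you define the map in the opposite direction (from the fiber product to the semidirect product, with the roles of the two coordinates swapped) and you exhibit the inverse explicitly rather than arguing injectivity plus an order count.
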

\begin{proof}
  We define $\Phi: U \rtimes G \rightarrow G \times_H G , (u,g)
  \mapsto (g,ug)$.  Since $\kappa(u)=1$ for $u\in U$ we have $(g,ug)\in G \times_H G$.
  Furthermore, $\Phi$ is a homomorphism since 
  $$\Phi(u_1,g_1) \Phi(u_2,g_2) = (g_1,u_1g_1)(g_2,u_2g_2)=(g_1g_2,u_1g_1u_2g_2)$$
  and $$\Phi((u_1,g_1)(u_2,g_2))
  =\Phi((u_1u_2^{(g_1^{-1})},g_1 g_2)) = (g_1 g_2, u_1u_2^{(g_1^{-1})}g_1g_2)
  =(g_1g_2,u_1g_1u_2g_2).$$
  Finally, $\Phi$ is injective and then, also surjective since both groups
  have the same finite order.
\end{proof}

In the following we concentrate on embedding problems with abelian kernel $A$:
\begin{equation}\label{einbettab}
1 \longrightarrow A \longrightarrow G 
     \stackrel{\kappa}{\longrightarrow} H \longrightarrow 1
   \end{equation}
   We interpret $A$ as an $H$--module via
   $\Psi: H \rightarrow \Aut(A): h \mapsto (a \mapsto
   a^{\kappa^{-1}(h)})$.  We can take an arbitrary preimage of $h$
   since $A$ is abelian. Note that this not well defined for non
   abelian normal subgroups. For given $A$, $H$, and $\Psi:H \rightarrow \Aut(A)$ we
   construct a semidirect product as above.

   The following theorem is the main result of this section. It gives a relation between
   split and non-split embedding problems with abelian kernel.
\begin{theorem} \label{satz:subd2}
  
  Let $G$ be a group extension as in \eqref{einbettab} and
  $\Psi: H \rightarrow \Aut(A), h \mapsto (a \mapsto a^{(\kappa^{-1}(h))^{-1}})$.
  Let $A \rtimes H$ be the corresponding semidirect product. Then:
  $$G \times_H G \cong  G \times_H (A \rtimes H).$$
  Furthermore,
  $A\rtimes H \cong (G\times_H G)/\{(a,a)\mid a \in A\}$.
\end{theorem}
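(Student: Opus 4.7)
My plan is to exhibit an explicit isomorphism $F\colon G\times_H G \to G\times_H (A\rtimes H)$ and then identify the preimage of the natural kernel of the second projection; both assertions of the theorem will follow.

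First I would set up convenient parametrizations. Every element of $G\times_H G$ has a unique description $(g,ag)$ with $g\in G$ and $a\in A$, because $\kappa(g)=\kappa(g')$ forces $g'g^{-1}\in A$. Similarly, every element of $G\times_H (A\rtimes H)$ is uniquely $(g,(a,\kappa(g)))$. The candidate map
$$F(g,ag)\;:=\;\bigl(g,(a,\kappa(g))\bigr)$$
is therefore a bijection by construction, so all that remains is to show it respects multiplication.

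Next I would verify the homomorphism property by computing both products. In $G\times_H G$ one has
$$(g_1,a_1g_1)(g_2,a_2g_2)=\bigl(g_1g_2,\;a_1(g_1a_2g_1^{-1})\,g_1g_2\bigr),$$
while in the semidirect product $A\rtimes H$ one has $(a_1,\kappa(g_1))(a_2,\kappa(g_2))=(a_1\,\Psi(\kappa(g_1))(a_2),\,\kappa(g_1g_2))$. Because $A$ is abelian, $\Psi(\kappa(g_1))$ is independent of the choice of preimage, so one may use $g_1$ itself, giving $\Psi(\kappa(g_1))(a_2)=g_1a_2g_1^{-1}$. The two products then match under $F$. (This is essentially a reinterpretation of Lemma \ref{satz:subd}: both sides are $A\rtimes G$ with the conjugation action.)

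For the second assertion, the projection $\pi_2\colon G\times_H(A\rtimes H)\to A\rtimes H$ is surjective, with kernel $\{(a,(e_A,e_H)) : a\in A\}$ (the preimages of $e_H$ under $\kappa\colon G\to H$ are exactly the elements of $A$). Pulling this kernel back through $F$ gives $\{(a,a) : a\in A\}$, which is therefore a normal subgroup of $G\times_H G$, and the first isomorphism theorem applied to $\pi_2\circ F$ yields $(G\times_H G)/\{(a,a) : a\in A\}\cong A\rtimes H$. The main (and only) obstacle is bookkeeping the conjugation convention for $\Psi$; the abelianness of $A$ is used exactly twice, namely to make $\Psi$ well-defined on $H$ and to ensure the diagonal copy of $A$ is normal. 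Everything else is immediate from the parametrization.
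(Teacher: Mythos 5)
Your argument is correct and is essentially the paper's proof presented more directly: the isomorphism $(g,ag)\mapsto\bigl(g,(a,\kappa(g))\bigr)$ is exactly the composite of the paper's identification $G\times_H G\cong A\rtimes G$ from Lemma \ref{satz:subd} with its map $A\rtimes G\to G\times_H(A\rtimes H)$, $(a,g)\mapsto(g,(a,\kappa(g)))$, and your multiplication check simply unfolds that lemma in place. The only cosmetic difference is that the paper factors through the intermediate group $A\rtimes G$ while you verify the homomorphism property in one step and obtain normality of the diagonal $\{(a,a):a\in A\}$ for free as a kernel.
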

\begin{proof}
  The homomorphism $\Psi$ can be canonically extended to $G$.
  Using Lemma \ref{satz:subd} we get
  $G\times_H G \cong A \rtimes G$.  Furthermore,
  $\Phi: A \rtimes G \rightarrow A \rtimes H, (a,g) \mapsto (a,\kappa(g))$ is a surjective homomorphism since the automorphism corresponding to $h \in H$ is independent of the preimage under $\kappa$. The kernel equals $\{(1,a)\mid a \in  A\}$.
  Using the isomorphism of Lemma \ref{satz:subd} this kernel will be mapped to 
  $\{(a,a) \mid a \in A\}\normal G \times_H G$.
  We get that
  $$A\rtimes G  \rightarrow G \times_H (A \rtimes H), (a,g) \mapsto ( g,
  (a,\kappa(g)))$$ is an isomorphism.
\end{proof}

The application of this theorem corresponds to the following field diagram:
\begin{minipage}{.31\linewidth}\vspace{0pt}
$$\begin{diagram}
\node[2]{N} \arrow[1]{se,-} \arrow[1]{s,-} \arrow{sw,-}\\
\node[1]{L_1} \arrow{se,-} \arrow{sse,r,-}{G}\node{L_2} \arrow[1]{s,-} \node{L_3} \arrow{ssw,r,-}{A\rtimes H}\arrow[1]{sw,-}\\
\node[2]{K} \arrow{s,r,-,1}{H} \\
\node[2]{k} 
\end{diagram}$$
\end{minipage}
\begin{minipage}{.68\linewidth}\vspace{0pt}
We assume that $\Gal(L_1/k)\cong\Gal(L_2/k)\cong G$, $L_1\cap L_2=K$, and $N=L_1L_2$.
Then there exists a field $K\leq L_3\leq N$ with $\Gal(L_3/k)= A\rtimes H$. Similar, if we know
fields $L_1,L_3$ with the above Galois groups (and $L_1L_3=N, L_1\cap L_3=K$) then the theorem gives us $L_2$ with Galois group $G$.
Depending on the embedding problem there are more fields $K< L < N$ with Galois group $G$.
This number is a group theoretic invariant which can be computed in concrete situations.
\end{minipage}
In this paper we would like to study nilpotent groups. Since non-trivial nilpotent groups have non-trivial center, we
can reduce to the case that $A=C_\ell$ is a cyclic group of prime order in the center. We get:
\begin{equation}\label{einbett}
1 \longrightarrow C_\ell \longrightarrow G 
     \stackrel{\kappa}{\longrightarrow} H \longrightarrow 1 \;\mbox{ central.}
\end{equation}

The easiest case is when a primitive $\ell$-th root of unity $\zeta_\ell$ is already contained in
$k$. Then the embedding problem corresponding to \eqref{einbett} is a so called Brauer embedding problem,
e.g. see \cite[chapter IV.7]{MM}.

Since $\zeta_\ell\in k$ we then have solution fields $L/K$ of the form 
$L=K(\sqrt[\ell]{\alpha})$ for suitable $\alpha\in K$ (Kummer theory). There are other nice properties
of Brauer embedding problems, e.g., a local-global principle is valid. For us the following lemma is important.
The proof can be found in  \cite[chapter IV.7]{MM}.
\begin{lemma} \label{brauerlem}
  Let $L=K(\sqrt[\ell]{\alpha})$ be a solution field of a  Brauer embedding problem. Then all solution
  fields are given by
  $K(\sqrt[\ell]{a \alpha})$ with suitable  $0\ne a \in k$.
\end{lemma}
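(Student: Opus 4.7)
The plan is to apply Theorem~\ref{satz:subd2} to the compositum of two distinct solution fields and then invoke Kummer theory inside the resulting bicyclic extension over $K$.

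First I would let $L_2$ be any solution field. Since $\Gal(L_1/K)\cong C_\ell$ is simple, the only intermediate fields of $L_1/K$ are $K$ and $L_1$, so either $L_2=L_1$ (in which case $a=1$ works) or $L_1\cap L_2=K$. In the latter case I would set $N:=L_1L_2$, so that $\Gal(N/k)\cong G\times_H G$ by the discussion after Definition~\ref{def:subdirekt}. Then I would apply Theorem~\ref{satz:subd2} to produce an intermediate field $K\le L_3\le N$ with $\Gal(L_3/k)\cong C_\ell\rtimes H$.

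The centrality of the embedding problem forces the action of $H$ on $C_\ell$ to be trivial, so $C_\ell\rtimes H=C_\ell\times H$; reading off this direct product, the fixed field $k_1$ of the $H$-factor is a cyclic $C_\ell$-extension of $k$ with $L_3=k_1 K$. Since $\zeta_\ell\in k$, Kummer theory supplies $b\in k^\times$ with $k_1=k(\sqrt[\ell]{b})$, giving $N=L_1L_3=K(\sqrt[\ell]{\alpha},\sqrt[\ell]{b})$, a bicyclic $C_\ell\times C_\ell$-extension of $K$.

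Kummer theory applied to $N/K$ then shows that every degree-$\ell$ subextension has the form $K(\sqrt[\ell]{\alpha^i b^j})$ for a nonzero class $(i,j)\in\F_\ell^2$ up to scalar. Since $\Gal(L_2/k)\cong G$ differs from $\Gal(L_3/k)\cong C_\ell\times H$, the fields $L_2$ and $L_3$ are distinct, so the exponent $i$ must be nonzero. Rescaling to take $i=1$ yields $L_2=K(\sqrt[\ell]{a\alpha})$ with $a:=b^{ji^{-1}}\in k^\times$, as required.

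The main obstacle will be invoking centrality to collapse the semidirect product $C_\ell\rtimes H$ produced by Theorem~\ref{satz:subd2} into a direct product; without this, $L_3$ would only be a cyclic degree-$\ell$ extension of $K$, rather than arising from a Kummer extension of the base field $k$, and the twist parameter $a$ would not live in $k^\times$.
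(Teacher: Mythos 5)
The paper gives no proof of this lemma, citing \cite[chapter IV.7]{MM} instead, but remarks that the group-theoretic machinery just developed recovers (and, in Theorems~\ref{satzzen} and~\ref{satzzen2}, generalizes) the Kummer-theoretic correspondence. Your argument is a valid realization of that idea from the ingredients available before the lemma: Theorem~\ref{satz:subd2} produces the auxiliary field $L_3$ with $\Gal(L_3/k)\cong C_\ell\rtimes H$, centrality collapses this to $C_\ell\times H$ so that $L_3=Kk_1$ with $k_1=k(\sqrt[\ell]{b})$ cyclic over $k$, and Kummer theory in $N/K$ plus rescaling finishes. So you are not reproducing a proof the paper contains, but your route is precisely the one the paper gestures at and later abstracts.

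One justification should be repaired. You infer $L_2\ne L_3$ (and you implicitly need $L_1\ne L_3$ as well, for $N=L_1L_3$) from the claim that $\Gal(L_2/k)\cong G$ differs from $\Gal(L_3/k)\cong C_\ell\times H$. That comparison of abstract isomorphism types fails exactly when the central extension \eqref{einbett} is split, i.e.\ $G\cong C_\ell\times H$, and split Brauer embedding problems are not excluded by the hypotheses. The distinctness you need is nevertheless true, but for a Galois-correspondence reason inside $N$: with $A=\ker\kappa\cong C_\ell$, the groups $\Gal(N/L_1)$, $\Gal(N/L_2)$, $\Gal(N/L_3)$ are the three pairwise distinct subgroups $1\times A$, $A\times 1$, and the diagonal $\{(a,a):a\in A\}$ of $A\times A\le G\times_H G$ (the diagonal is identified by the final assertion of Theorem~\ref{satz:subd2}), so the three fixed fields are pairwise distinct. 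Replacing the isomorphism-type comparison by this subgroup comparison closes the gap, and the rest of your argument then goes through.
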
                     

This lemma was the starting point of the approach in \cite{KlMa2}.
Via Kummer theory a similar correspondence was shown for fields $k$ not containing $\zeta_\ell$.
We show this correspondence using the group theoretic approach we have just developed.

\begin{theorem}\label{satzzen}
  Let $G$ be a central extension as in  \eqref{einbett} and
  $\tilde{G} = C_\ell \times G \cong G \times_H G$. 
  Then for
  $D:=\{(a,b)\in C_\ell \times G\mid \kappa(b)=1\}\cong C_\ell\times C_\ell$ we get:
  \begin{enumerate}
  \item $D \leq \Zen(\tilde{G}) = C_\ell \times \Zen(G)$.
  \item $D$ has $\ell+1$ subgroups
    $U_1,\ldots,U_{\ell+1}$ of order $\ell$. All these subgroups are normal in 
    $\tilde{G}$.
  \item All quotients $\tilde{G}/U_i\;(1\leq i \leq \ell+1)$ except one
    are isomorphic to $G$. The remaining quotient is isomorphic to
    $C_\ell\times H$.
  \end{enumerate}
\end{theorem}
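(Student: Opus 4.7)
The plan is to work directly inside $\tilde{G} = C_\ell \times G$ and exploit the centrality hypothesis. Fix a generator $s$ of the first factor $C_\ell$ and a generator $t$ of $\ker(\kappa) \leq G$. Since the extension \eqref{einbett} is assumed central, $\langle t \rangle$ lies in $Z(G)$, while $\langle s \rangle$ is automatically central in the direct product. Hence $D = \langle s \rangle \times \langle t \rangle \leq C_\ell \times Z(G) = Z(\tilde{G})$, which settles (i) together with the standard identity $Z(A\times B) = Z(A)\times Z(B)$.

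For (ii), $D \cong C_\ell \times C_\ell$ is a two-dimensional $\F_\ell$-vector space, so its one-dimensional subspaces number $(\ell^2-1)/(\ell-1) = \ell+1$, giving exactly $\ell+1$ subgroups of order $\ell$. Since each sits in $Z(\tilde{G})$ by (i), all of them are automatically normal in $\tilde{G}$.

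For (iii), I would enumerate these subgroups as $U_\infty := \langle t \rangle$ and $U_i := \langle s t^i \rangle$ for $i = 0, 1, \ldots, \ell-1$, and compute each quotient directly. The first case is immediate from the structure of a direct product:
\[ \tilde{G}/U_\infty = (C_\ell \times G)/(\{1\} \times \ker(\kappa)) \cong C_\ell \times (G/\ker(\kappa)) = C_\ell \times H, \]
which supplies the exceptional quotient. For each of the remaining $\ell$ subgroups I would exhibit the explicit map $\phi_i : \tilde{G} \to G$, $(s^a, g) \mapsto t^{-ia} g$, and verify it is a surjective homomorphism with kernel precisely $U_i$. Multiplicativity holds because $t$ is central in $G$ (so one may commute $t^{-ia'}$ past $g$), surjectivity is clear from $\phi_i(1,g) = g$, and the kernel computation $t^{-ia}g = 1 \Leftrightarrow g = t^{ia}$ identifies $\ker(\phi_i)$ with the subgroup generated by $(s,t^i)$, i.e.\ with $U_i$. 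So $\tilde{G}/U_i \cong G$ for each of these $\ell$ values.

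The only substantive point, and the main obstacle in a generous sense, is recognizing that the centrality of $t$ is exactly what forces $\phi_i$ to be a homomorphism for every $i$, not just for $i = 0$ (the obvious projection onto $G$). Once this is observed, the theorem reduces to linear-algebraic bookkeeping inside the two-dimensional $\F_\ell$-space $D$, and the trichotomy $\{U_\infty\} \cup \{U_i : 0 \leq i \leq \ell-1\}$ accounts for all $\ell+1$ lines.
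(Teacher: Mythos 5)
Your proof is correct. The key lemma is the same — centrality of the kernel generator $t$ makes the ``twisted projection'' $\phi_i(s^a,g)=t^{-ia}g$ a homomorphism — and your enumeration of the $\ell+1$ lines in $D\cong\F_\ell^2$ and the computation of each quotient are all fine.

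The route is, however, genuinely different from the paper's. You work entirely inside the direct product $\tilde G=C_\ell\times G$ and exhibit, for each nonexceptional $U_i$, an explicit surjection $\phi_i:\tilde G\to G$ with $\ker\phi_i=U_i$; part (iii) then falls out of the first isomorphism theorem. The paper instead transports $U_i$ through the isomorphism $\Phi:C_\ell\times G\to G\times_H G$ of Lemma~\ref{satz:subd}, getting $\tilde U_i=\langle(a^i,a^{i+1})\rangle$, and shows that every coset of $\tilde U_i$ in $G\times_H G$ has a diagonal representative $(g,g)$, so the quotient is $G$ via the diagonal copy. Your argument is shorter and more self-contained: it does not need Lemma~\ref{satz:subd} or Theorem~\ref{satz:subd2} at all (those only enter in stating the auxiliary isomorphism $\tilde G\cong G\times_H G$), and it sidesteps the slightly delicate choice of $l\equiv -j\bmod\ell$ in the paper's coset computation. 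What the paper's version buys is alignment with the downstream application: when two solution fields $L_1,L_2$ are composed, the group $\Gal(L_1L_2/k)$ arises naturally as $G\times_H G$, and the diagonal-representative description of the quotient is exactly what one wants to read off the third intermediate field $L_3$ in the field diagram that follows. Both proofs are clean; yours is arguably preferable as a standalone argument for the theorem.
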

\begin{proof}
  The isomorphy $C_\ell \times G \cong (C_\ell \times H) \times_H G \cong G \times_H G$ is Theorem \ref{satz:subd2}.
  The first two assertions are elementary. Let $a$ be a generator of the kernel of $\kappa$.
  We remark that we interpret the first factor of the direct product as the subgroup in the kernel of
  $\kappa$.
  Then we describe the subgroups $U_i \normal C_\ell\times G$ as follows
  $$U_i:=\langle(a,a^i)\rangle \;(\mbox{ for }1\leq i \leq \ell)\mbox{ and }
  U_{\ell+1}:=\langle (1,a)\rangle.$$
  Clearly, we get $\tilde{G}/U_{\ell+1} \cong C_\ell \times H$.
  We define the following images under the isomorphism $\Phi$ from the proof of Lemma \ref{satz:subd}:
  $$\tilde{U}_i= \Phi(U_i)=\langle(a^i,a^{i+1})\rangle \mbox{ for }1\leq i  \leq \ell.$$
  Using the isomorphism we get
  $(C_\ell \times G)/U_i \cong (G \times_H G)/\tilde{U}_i$.  Let
  $(g,\tilde{g})\in G \times_H G$. Since
  $\kappa(g)=\kappa(\tilde{g})$ we get $\tilde{g}=ga^j$ for some
  $j\in \{0,\ldots,\ell-1\}$.\\   We choose 
  $l\equiv -j \bmod \ell$ and get:
 $$(g,ga^j)(a^i,a^{i+1})^l 
 = (ga^{il},ga^{j+(i+1)l}) =
 (ga^{il},ga^{-l+(i+1)l})=(ga^{il},ga^{il}).$$ Therefore we find in
 the quotient $(G \times_H G)/\tilde{U}_i$ representatives for each
 element, whose components coincide. Therefore this quotient is isomorphic to $G$.
\end{proof}

This theorem shows that we can describe all solutions of our embedding problem  \eqref{einbett}, if we
know one solution. Let $L/K$ be a solution field and $M/k$ be a
$C_\ell$-extension with $[ML:L]=\ell$. Using Theorem \ref{satzzen} we get  $\ell-1$
new solution fields of our embedding problem.
Two cyclic $C_\ell$-extensions $M_1,M_2$ of $k$ parameterize the same solution fields, if $LM_1=LM_2$.
This condition defines on the set of $C_\ell$-extensions of $k$ an equivalence relation. For technical
reasons the trivial extension is contained in the next definition.
$${\mathcal M}_K:=\{M/k \mid \Gal(M/k) \leq C_\ell\}/\sim,$$
where $M_1\sim M_2$ if and only if $LM_1=LM_2$. We see that $\sim$ is
only depending on the maximal $\ell$-elementary abelian subextension
of $L/k$. For $M_1\leq L$ we get that $M_1$ is in the trivial
class. Let $M_1,M_2$ be not in the trivial class. Then $M_1\sim M_2$
if and only if $M_1M_2\cap L \ne k$.

Since $L/k$ contains only finitely many $C_\ell$-extensions, we get:
\begin{theorem}\label{satzzen2}
  Let $K/k$ be a normal extension of number fields with $\Gal(K/k)=H$. Denote by
  $L/K$ a solution field of our embedding problem \eqref{einbett}.
  We define 
  $$\psi:  \{\tilde{L}/K \mid \tilde{L} \mbox{ solution field}\} \rightarrow
  {\mathcal M}_K, \tilde{L} \mapsto [M],$$
  where $M/k$ is a cyclic extension with $LM=L\tilde{L}$. Then we get:
  \begin{enumerate}
  \item $\psi$ is a mapping.
  \item The trivial class  $[M]$ with $M\leq L$ has exactly one preimage.
    All other classes have exactly $\ell-1$ preimages.
  \item Denote by $r$ the $\ell$--rank of the maximal abelian quotient of $G$.
    Then the trivial class contains exactly $\frac{\ell^{r}-1}{\ell-1}+1$ elements.
    All other classes have exactly $\ell^{r}$ elements.
  \end{enumerate}
\end{theorem}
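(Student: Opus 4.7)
The plan is to reduce everything to the structure of the compositum $L\tilde L$ of two solution fields, using Theorems~\ref{satz:subd2} and~\ref{satzzen}, and then perform an elementary count. The preparatory observation I rely on is that if $\tilde L/K$ is another solution field, then $L\cap\tilde L$ contains $K$ and is a subfield of $L$ with $[L:K]=\ell$ prime, so either $\tilde L=L$ or $L\cap\tilde L=K$. In the second case $\Gal(L\tilde L/k)\cong G\times_H G$; since the kernel $C_\ell$ is central the action of $H$ on $C_\ell$ is trivial, hence $C_\ell\rtimes H=C_\ell\times H$, and Theorem~\ref{satz:subd2} identifies $\Gal(L\tilde L/k)$ with $C_\ell\times G$.

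I would define $\psi(L):=[k]$, and for $\tilde L\ne L$ proceed as follows. The abelianization of $C_\ell\times G$ has $\ell$-rank one greater than that of $G$, so $L\tilde L$ contains at least one cyclic degree-$\ell$ extension $M/k$ that does not lie inside $L$; for any such $M$ we have $L\subsetneq LM\subseteq L\tilde L$, and $[L\tilde L:L]=\ell$ forces $LM=L\tilde L$. Two such choices $M_1,M_2$ satisfy $LM_1=L\tilde L=LM_2$, so $M_1\sim M_2$ by definition of the equivalence relation, giving well-definedness in~(i).

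For (ii), a preimage of the trivial class would require $LM'=L$ for some $M'\subseteq L$, forcing $L\tilde L=L$ and thus $\tilde L=L$; so the trivial class has a unique preimage. For a non-trivial class $[M]$, any preimage $\tilde L$ satisfies $L\tilde L=LM$, in particular $\tilde L\subseteq LM$. Applying Theorem~\ref{satzzen} to $\tilde G=\Gal(LM/k)\cong C_\ell\times G$, the subgroup $D\subset\tilde G$ has $\ell+1$ order-$\ell$ subgroups, exactly one of which gives quotient $C_\ell\times H$ (whose fixed field is $KM$), and the remaining $\ell$ yield solution subfields of $LM$; removing $L$ itself leaves $\ell-1$ preimages.

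For (iii), let $r$ be the $\ell$-rank of the maximal abelian quotient of $G$. The trivial class consists of all $M'$ with $M'\subseteq L$, namely the trivial extension $M'=k$ together with the $(\ell^r-1)/(\ell-1)$ cyclic degree-$\ell$ subfields of $L/k$. For a non-trivial class $[M]$, the class consists of the cyclic degree-$\ell$ extensions of $k$ inside $LM$ but not inside $L$; since $\Gal(LM/k)^{\mathrm{ab}}$ has $\ell$-rank $r+1$, this count equals $(\ell^{r+1}-1)/(\ell-1)-(\ell^r-1)/(\ell-1)=\ell^r$. The step I expect to require the most care is showing that $\psi$ is well-defined independently of the representative $M$ and that no solution field outside $LM$ can map to $[M]$; both reduce to the prime-degree dichotomy $L\cap\tilde L\in\{K,L\}$ together with the structural identification $\Gal(L\tilde L/k)\cong C_\ell\times G$.
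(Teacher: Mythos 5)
Your proposal is correct and follows essentially the same route as the paper's proof: reduce to the structure $\Gal(L\tilde L/k)\cong G\times_H G\cong C_\ell\times G$ via the prime-degree dichotomy $L\cap\tilde L\in\{K,L\}$, invoke Theorem~\ref{satzzen} for the $\ell-1$ preimages, and count $C_\ell$-subfields of $L$ and of $LM$ for part~(iii). You even state more precisely than the paper (which appears to have a slip, writing $K/k$ where it means $L/k$) that the $\frac{\ell^r-1}{\ell-1}$ cyclic degree-$\ell$ fields in the trivial class are the subfields of $L/k$, consistent with $r$ being the $\ell$-rank of the abelianization of $G=\Gal(L/k)$.
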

\begin{proof}
  Using Theorem \ref{satzzen} and the preceding discussion every solution $\tilde{L}$ corresponds to exactly one class in ${\mathcal M}_K$. This shows that $\psi$ is a mapping.
  Furthermore, only the special solution $L$ corresponds to the trivial class.
  Using Theorem \ref{satzzen} we see that $\ell-1$ solutions correspond to a non-trivial class.
  The  extension $K/k$ has exactly $\frac{\ell^{r}-1}{\ell-1}$ subfields with Galois group $C_\ell$ over $k$.
  Together with the trivial extension $k$ these fields are the trivial class.
  For a non-trivial class with representative $M$  we get that
  $LM$ has $\frac{\ell^{r+1}-1}{\ell-1}$ intermediate fields with Galois group $C_\ell$ over $k$.
  Exactly $\frac{\ell^{r}-1}{\ell-1}$ of those fields are contained in $L$. Therefore we get
  $$\frac{\ell^{r+1}-1}{\ell-1} - \frac{\ell^{r}-1}{\ell-1} 
  = \frac{\ell^{r+1}-\ell^{r}}{\ell-1} = \ell^{r}$$
  fields in the equivalence class of $M$.
\end{proof}

Using Theorem \ref{satzzen2} we parameterize all solutions of a central embedding problem with cyclic kernel of order $\ell$ via cyclic extensions of $k$. In the following we have to solve the following two problems:
\begin{enumerate}
\item How do we choose the special solution $L$?
\item How can we compute the discriminant $d_{\tilde L/k}$ from $L$ and $M$? 
\end{enumerate}

\section{Nilpotent groups} \label{nilp}

\subsection{Reduction to $\ell$-groups}
Let $G\leq S_n$ be a transitive nilpotent group. It is well known that
$G$ is a direct product of its Sylow $\ell$-subgroups.  If we consider
$G$ as a permutation group, we can prove (e.g. see \cite[Lemma 3.1]{KlWa}) that $G$ is a natural direct product of its Sylow $\ell$-subgroups, where the action as permutation groups is given by the action on the blocks.
\begin{lemma}\label{nil_a}
 Let $G_1\leq S_{n_1} \text{ and } G_2\leq S_{n_2}$ be transitive nilpotent groups which are of coprime order and assume $n_1,n_2>1$. Define the natural direct product $G =G_1\times G_2\leq S_{n_1n_2}$. Then $a(G) = \max(a(G_1)/n_2,a(G_2)/n_1)$ and the two numbers in the maximum
  are different.
\end{lemma}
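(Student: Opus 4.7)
The plan is to compute $\ind(G)$ directly by analysing the cycle structure of elements of $G_1\times G_2$ on $\Omega := \Omega_1\times\Omega_2$, and to exploit the coprimality of $|G_1|$ and $|G_2|$ at the level of individual cycles. First I would compute the number of orbits $o((g_1,g_2))$ on $\Omega$: since $|G_1|$ and $|G_2|$ are coprime, so are the orders of any $g_1$ and $g_2$, hence every cycle length of $g_1$ is coprime to every cycle length of $g_2$. Applied cycle-by-cycle, a length-$a$ cycle of $g_1$ and a length-$b$ cycle of $g_2$ with $\gcd(a,b)=1$ combine into a single cycle of length $ab$ on the $ab$ pairs, giving
\[
o((g_1,g_2)) \;=\; o(g_1)\,o(g_2).
\]

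Next, substituting $o(g_i) = n_i - \ind(g_i)$ and expanding yields
\[
\ind((g_1,g_2)) \;=\; n_1 n_2 - (n_1-\ind(g_1))(n_2-\ind(g_2)) \;=\; n_2\ind(g_1) + o(g_1)\,\ind(g_2).
\]
If both $g_1 \ne \id$ and $g_2 \ne \id$, then $o(g_1) \ge 1$ and $\ind(g_2) \ge 1$, so the second summand is strictly positive and $\ind((g_1,g_2)) > n_2\ind(g_1) \ge n_2\ind(G_1)$; by symmetry it also strictly exceeds $n_1\ind(G_2)$. Consequently the minimum index over non-identity elements of $G$ is attained on an element of the form $(g_1,\id)$ or $(\id,g_2)$. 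This gives $\ind(G) = \min(n_2\ind(G_1),\,n_1\ind(G_2))$, and taking reciprocals produces the claimed formula $a(G) = \max(a(G_1)/n_2,\,a(G_2)/n_1)$.

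Finally, the distinctness claim: suppose for contradiction that $n_2\ind(G_1) = n_1\ind(G_2)$. Since $n_i \mid |G_i|$, coprimality of the group orders forces $\gcd(n_1,n_2)=1$, so the equality would imply $n_2 \mid \ind(G_2)$. But for any non-identity $g_2\in G_2$ one has $1 \le o(g_2) \le n_2-1$ (transitivity prevents a non-identity element from fixing all points, while every permutation has at least one cycle), hence $1 \le \ind(G_2) \le n_2-1$, contradicting $n_2 \mid \ind(G_2)$. The main obstacle is really just the multiplicative orbit count in step one; once the coprimality of cycle lengths is exploited, the rest is an arithmetic verification that the minimum is hit on a ``pure'' factor and that the two candidate values cannot coincide.
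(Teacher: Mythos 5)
Your proof is correct, and for the equality part it is genuinely more self-contained than the paper's. The paper simply cites Malle (\cite[Lemma 4.1]{Ma4}) for the identity $a(G) = \max(a(G_1)/n_2,\,a(G_2)/n_1)$ and only proves the distinctness claim itself. You instead derive the formula from scratch by observing that, because $|G_1|$ and $|G_2|$ are coprime, every cycle length of $g_1$ is coprime to every cycle length of $g_2$, so a length-$a$ cycle of $g_1$ and a length-$b$ cycle of $g_2$ fuse into a single length-$ab$ cycle on $\Omega_1\times\Omega_2$; this gives the clean multiplicativity $o((g_1,g_2)) = o(g_1)\,o(g_2)$, and the rewriting $\ind((g_1,g_2)) = n_2\ind(g_1) + o(g_1)\ind(g_2)$ shows the minimal index cannot occur on a mixed element, forcing it onto one of the pure factors. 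That is a tidy, fully elementary route and worth keeping as a remark or exercise; the paper's citation buys brevity, while your argument buys self-containment and makes transparent exactly where coprimality of the orders enters.

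For the distinctness claim your argument and the paper's coincide: both reduce $n_2\ind(G_1) = n_1\ind(G_2)$ via $\gcd(n_1,n_2)=1$ to a divisibility statement ($n_1\mid\ind(G_1)$ in the paper, $n_2\mid\ind(G_2)$ in yours, symmetric) and then contradict the bound $1\le\ind(G_i)\le n_i-1$, which holds because $G_i$ is transitive of degree $n_i>1$. One tiny gap in presentation: you invoke $\gcd(n_1,n_2)=1$ as following from ``coprimality of the group orders'', which is fine since $n_i\mid|G_i|$, but it would be cleaner to say so at the outset since you already use this fact implicitly when you treat $G_1\times G_2$ as acting transitively on $n_1n_2$ points.
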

\begin{proof}
  The equality is proved in \cite[Lemma 4.1]{Ma4}. Suppose that the two numbers are equal. Using
  $a(G_i)^{-1}=\ind(G_i)$ we get $n_1a(G_1) = n_2a(G_2)$ which is equivalent to
  $$n_1\ind(G_2) = n_2\ind(G_1).$$
  Since $\gcd(n_1,n_2)=1$ we get that $n_1\mid \ind(G_1)$, but by definition we have $1\leq \ind(G_1)<n_1$,
  since $G_1$ is not the trivial group. This is a contradiction.
\end{proof}
Putting these results together we get the following proposition.
\begin{proposition}\label{thm:nilp}
	A transitive nilpotent permutation group $G\leq S_n$ is permutation isomorphic to the natural direct product of transitive permutation groups $G_{\ell}\leq S_{n_{\ell}}$,
	$$G \simeq \prod_{\ell} G_{\ell} \mbox{ with } n=\prod_{\ell} n_{\ell},$$
	where the $G_{\ell}$ are isomorphic to the Sylow $\ell$-subgroups of $G$ and $n_{\ell}$ is the maximal $\ell$-power dividing $n$. Furthermore, we have that
	$$a(G) = \max(na(G_{\ell})/n_{\ell} \mid \ell\in  \PP).$$
	The maximum number $na(G_{\ell})/n_{\ell}$ is attained exactly once.
\end{proposition}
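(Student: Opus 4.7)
The plan is to reduce everything to the two-factor case already handled by Lemma \ref{nil_a}, via induction on the number $r$ of primes dividing $|G|$. The first assertion—that $G$ is permutation isomorphic to $\prod_\ell G_\ell$ with $n=\prod_\ell n_\ell$—is essentially the content of \cite[Lemma 3.1]{KlWa}, already cited just before the proposition: abstractly, $G$ is the internal direct product of its Sylow subgroups because it is nilpotent, and \cite[Lemma 3.1]{KlWa} identifies this decomposition with the natural permutation direct product via the action on the block system. So I would simply invoke it and move on.

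For the formula for $a(G)$ and the uniqueness of the maximizing prime, I would induct on $r$. The base case $r=1$ is trivial since then $G=G_\ell$ and $n=n_\ell$. For the inductive step, single out one prime $\ell_1$ and write $G = G_{\ell_1}\times H$, where $H := \prod_{i\geq 2} G_{\ell_i}$ is a transitive nilpotent permutation group on $m:=n/n_{\ell_1}$ points, of order coprime to $|G_{\ell_1}|$. Lemma \ref{nil_a} then produces
\[
a(G) \;=\; \max\bigl(a(G_{\ell_1})/m,\ a(H)/n_{\ell_1}\bigr),
\]
and crucially the two entries are distinct. Substituting the inductive description of $a(H)$ as a maximum over the primes $\ell\neq\ell_1$, and rewriting the resulting fractions in terms of $n$, $n_\ell$, and $a(G_\ell)$, then reassembles the formula asserted in the proposition.

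Uniqueness of the maximizer propagates through the induction as follows. Lemma \ref{nil_a} guarantees that among the two outer entries $a(G_{\ell_1})/m$ and $a(H)/n_{\ell_1}$ exactly one realizes the maximum. If it is the first, then $\ell_1$ is the unique overall maximizer; otherwise the inductive hypothesis provides a unique prime $\ell_i$ ($i\geq 2$) realizing $a(H)$, and this $\ell_i$ is then the unique overall maximizer.

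The main obstacle is quite modest: one just has to carefully thread the uniqueness assertion through the induction, which requires invoking both the value \emph{and} the strict inequality from Lemma \ref{nil_a} at every inductive step. The algebraic identities needed in the substitution are routine, and no further ingredients beyond Lemma \ref{nil_a} and \cite[Lemma 3.1]{KlWa} are required.
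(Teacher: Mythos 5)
Your proposal is correct and follows the paper's own route: invoke \cite[Lemma 3.1]{KlWa} for the permutation direct-product decomposition, and then extend Lemma \ref{nil_a} to more than two factors by induction, which is exactly what the paper's one-line proof gestures at. Your explicit tracking of the strict inequality from Lemma \ref{nil_a} at each inductive step to propagate uniqueness of the maximizer is a faithful unpacking of the argument (note, incidentally, that the substitution you describe actually yields $a(G)=\max_\ell\bigl(n_\ell a(G_\ell)/n\bigr)$, so the displayed expression $na(G_\ell)/n_\ell$ in the proposition appears to be a typo).
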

 \begin{proof}
   This is the content of \cite[Lemma 3.1]{KlWa} and the extension of Lemma \ref{nil_a} to more than two factors.
 \end{proof}
 \begin{corollary}\label{crit_prime}
   Let $G\leq S_n$ be a transitive nilpotent group. Then all elements of $G$ of minimal index have the same order. 
 \end{corollary}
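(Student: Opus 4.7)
The plan is to combine Proposition~\ref{thm:nilp} with the standard fact that in an $\ell$-group every element of minimal index has order~$\ell$. Write $G\simeq\prod_\ell G_\ell$ as a natural direct product acting on $\Omega=\prod_\ell\Omega_\ell$, and let $\ell_0$ be the unique prime attaining the maximum in Proposition~\ref{thm:nilp}, so that $\ind(G)=n\,\ind(G_{\ell_0})/n_{\ell_0}$ and $\ind(G_\ell)/n_\ell>\ind(G_{\ell_0})/n_{\ell_0}$ for every $\ell\ne\ell_0$. Since orbits on a direct product are products of orbits, any $g=(g_\ell)\in G$ satisfies, with $\alpha_\ell:=\ind(g_\ell)/n_\ell$, $S:=\{\ell:g_\ell\ne\id\}$, and the convention $\ind(\id):=0$,
$$\ind(g)=n-\prod_\ell\bigl(n_\ell-\ind(g_\ell)\bigr)=n\Bigl(1-\prod_{\ell\in S}(1-\alpha_\ell)\Bigr).$$

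I first show that $\ind(g)=\ind(G)$ forces $S=\{\ell_0\}$ and $\ind(g_{\ell_0})=\ind(G_{\ell_0})$. Each factor satisfies $0<1-\alpha_\ell\le 1-\ind(G_{\ell_0})/n_{\ell_0}<1$. If $|S|\ge 2$, the product is strictly smaller than any single factor, hence strictly smaller than $1-\ind(G_{\ell_0})/n_{\ell_0}$, which yields $\ind(g)>\ind(G)$, a contradiction. So $S=\{\ell_1\}$ is a singleton, in which case $\alpha_{\ell_1}=\ind(G_{\ell_0})/n_{\ell_0}$. Combined with $\alpha_{\ell_1}=\ind(g_{\ell_1})/n_{\ell_1}\ge\ind(G_{\ell_1})/n_{\ell_1}\ge\ind(G_{\ell_0})/n_{\ell_0}$, this chain collapses to equalities, and the uniqueness of $\ell_0$ forces $\ell_1=\ell_0$ and $\ind(g_{\ell_0})=\ind(G_{\ell_0})$. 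In particular the order of $g$ equals the order of $g_{\ell_0}$, which is a power of $\ell_0$.

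To finish, it suffices to prove that any element $h$ of minimal index in an $\ell$-group $P\le S_m$ has order exactly~$\ell$. Suppose for contradiction that $h$ has order $\ell^k$ with $k\ge 2$. Since $\langle h^\ell\rangle<\langle h\rangle$, the orbits of $h^\ell$ refine those of $h$; a direct computation on the cyclic group $\langle h\rangle$ shows that every $h$-orbit of length $\ell^j$ with $j\ge 1$ splits into $\ell$ orbits of length $\ell^{j-1}$ under $h^\ell$, while fixed points stay fixed. Since $h\ne\id$, at least one $h$-orbit has length $\ge\ell$, so $h^\ell$ has strictly more orbits than $h$, i.e.\ $\ind(h^\ell)<\ind(h)$. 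Because $k\ge 2$ we have $h^\ell\ne\id$, contradicting the minimality of $\ind(h)$. Applying this to $P=G_{\ell_0}$ and $h=g_{\ell_0}$ shows that every minimum-index element of $G$ has order $\ell_0$, a value independent of the chosen element. The only step requiring any care is the orbit-refinement computation inside the $\ell$-group; the rest is routine bookkeeping.
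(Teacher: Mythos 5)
Your proof is correct and takes essentially the same route as the paper: use Proposition \ref{thm:nilp} to localize every minimal-index element to the unique critical Sylow factor $G_{\ell_0}$, then observe that a minimal-index element of an $\ell$-group must have order $\ell$ because passing to the $\ell$-th power strictly increases the orbit count. One small caution on wording: ``orbits on a direct product are products of orbits'' is false in general (a transposition acting diagonally on $\{1,2\}^2$ has two orbits, not one), so the identity $\ind(g)=n-\prod_\ell\bigl(n_\ell-\ind(g_\ell)\bigr)$ should be justified by noting that each $g_\ell$ has all orbit lengths a power of $\ell$, hence the lengths across distinct Sylow components are pairwise coprime and the orbit counts do multiply.
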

 \begin{proof}
   Using Proposition \ref{thm:nilp} we see that all elements of minimal index lie in the same Sylow $\ell$-subgroup for
   some $\ell\in\PP$. An element of minimal index is certainly of prime order, since otherwise a suitable power has more
   orbits. Therefore all these elements have order $\ell$.
 \end{proof}
 
\subsection{The general case}
Let $G$ be a finite nilpotent group and denote by $E$ the trivial group.
We choose a refinement of an upper central series of $G$ in a way that
each quotient is of prime order. We remark that the refinement is not unique and
for the following parts there might be good or bad choices to do the refinement.

Therefore we find groups
\begin{equation}\label{choice_Gi}
E=G_r < G_{r-1} < \ldots < G_{i} < G_{i-1} < \ldots < G_{1} < G_0 =G.
\end{equation}
\begin{minipage}{.25\linewidth}\vspace{10pt}
	\begin{tikzcd}
	K=K_r
	\arrow[-]{d}{G_{r-1}}   \arrow[bend right=55,-]{dddd}[left]{G_0} \arrow[bend right,-]{dd}[swap]{G_i}\\
	K_{r-1}
	\arrow[-,dashed]{d}\\
	K_i \arrow[-,dashed]{d}\arrow[bend right,-]{dd}[above left]{G/G_i} \\
	K_{1} \arrow[-]{d}{G/G_{1}} \\
	K_0=k\\
	\end{tikzcd}
\end{minipage}
\begin{minipage}{.74\linewidth}\vspace{0pt}
We remark that all $G_i$ are normal subgroups of $G$, all quotients $G_{i-1}/G_{i}$
are of prime order, and $G_{i-1}/G_i$ lies in the center of $G/G_i$ for all $1\leq i \leq r$.
Suppose we have an extension $K/k$ with Galois group $G$. Then we get
a tower of fields $k=K_0<\ldots< K_r=K$ such that
$\Gal(K/K_i)=G_i$.  Let $\idp$ be a prime ideal of $\OO_k$ and $\idP$ be a prime ideal
of $\OO_K$ lying above $\idp$. Denote by $G_\idP$ the inertia group of $\idP$.
We remark that the inertia groups of all
prime ideals lying over $\idp$ are conjugate.

Now we want to consider the situation that $\idp$ firstly becomes ramified in $K_i$, i.e.
$\idp$ is ramified in $K_i$ but not in $K_{i-1}$. We use the following easy lemma.
\end{minipage}

\begin{lemma}
The prime ideal $\idp$ is unramified in $K_i$ if and only if $G_\idP\leq G_i$.
\end{lemma}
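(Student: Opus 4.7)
The plan is to invoke the standard behaviour of inertia groups in a Galois tower. Since the $G_i$ sit in a (refinement of an) upper central series of $G$, each $G_i$ is normal in $G$, and hence $K_i/k$ is itself Galois with group $G/G_i$ (this is already used implicitly in the setup of the field diagram).

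The first step is to identify the inertia group of the prime $\idp_i := \idP\cap\OO_{K_i}$ over $\idp$ inside $\Gal(K_i/k)=G/G_i$. For a Galois tower $K/K_i/k$ with $G_i=\Gal(K/K_i)$ normal in $G$, a direct verification from the definition of the inertia group (or a reference to the standard behaviour of decomposition and inertia groups under normal subgroups, e.g.\ \cite[Prop.\ I.9.6]{Nar} in spirit) yields
$$I(\idp_i/\idp) \;=\; G_\idP\, G_i/G_i \;\leq\; G/G_i.$$
Thus $\idp_i$ is unramified over $\idp$ in $K_i/k$ if and only if $G_\idP G_i = G_i$, i.e.\ if and only if $G_\idP\leq G_i$.

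Finally, I would remark that this condition is independent of the choice of prime $\idP\mid \idp$ in $K$, since any other choice conjugates $G_\idP$ by an element of $G$ and $G_i$ is normal; equivalently, all primes of $K_i$ above $\idp$ are $\Gal(K_i/k)$--conjugate and therefore share the same ramification index. Hence $\idp$ is unramified in $K_i$ (at every prime above it) if and only if $G_\idP\leq G_i$, which is the desired equivalence.

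There is no real obstacle here: the statement is a direct corollary of the standard fact that inertia groups pass to quotients under normal subgroups. The only point requiring any care is confirming that all primes of $K_i$ above $\idp$ are treated uniformly, which is immediate from the normality of $G_i$ in $G$.
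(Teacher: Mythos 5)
Your proof is correct and is essentially the same argument as the paper's, merely phrased in the dual language: the paper fixes the inertia field $L=K^{G_\idP}$ and observes that $\idp$ is unramified in $K_i$ iff $K_i\subseteq L$ iff $G_\idP\leq G_i$, whereas you pass the inertia group to the quotient $G/G_i$ and observe it becomes trivial iff $G_\idP\leq G_i$. Both are immediate consequences of the same standard facts from Hilbert ramification theory, and your closing remark about normality of $G_i$ handling the choice of $\idP$ matches the paper's appeal to normality of $K_i/k$.
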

\begin{proof}
  By Hilbert`s ramification theory, the fixed field $L$ under $G_{\idP}$ is the largest subfield
  of $K$, for which $L \cap \idP$ is unramified over $\idp$. Since $K_i/k$ is normal, $\idp$ is unramified
  when one of the prime ideals lying over $\idp$ is unramified. Therefore $\idp$ is unramified in $K_i$
  if and only if $K_i$ is a subfield of $L$ which is equivalent to $G_\idP\leq G_i$.
\end{proof}
Now we see that $\idp$ firstly becomes ramified in $K_i$ if and only if
$G_\idP \leq G_{i-1}$ and $G_{\idP} \not\subseteq G_i$. This condition becomes much easier in the tamely ramified situation.
\begin{lemma}
  Let $\idp$ be a prime ideal of $\OO_k$ which is at most tamely ramified in $K$. Denote by $\sigma$
  a generator of the cyclic group $G_{\idP}$. Then $i$ is minimal such that
$\idp$ ramifies in $K_i$  
   if and only if
  $\sigma \in G_{i-1} \setminus G_i$.
\end{lemma}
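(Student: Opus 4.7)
The plan is to combine the previous lemma (characterizing unramifiedness in $K_i$ as $G_\idP \le G_i$) with the well-known fact that tame inertia groups are cyclic. Since $\idp$ is assumed at most tamely ramified in $K$, the inertia group $G_\idP$ is cyclic, so it equals $\langle \sigma \rangle$ as stated. Then for any subgroup $G_j$ of the chain \eqref{choice_Gi}, we have $G_\idP \le G_j \Leftrightarrow \sigma \in G_j$, because a cyclic subgroup lies in $G_j$ precisely when its generator does.

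The condition that $i$ is the smallest index for which $\idp$ ramifies in $K_i$ unpacks into two assertions: $\idp$ is ramified in $K_i$, and $\idp$ is unramified in $K_{i-1}$. Applying the preceding lemma to each assertion translates these into $G_\idP \not\subseteq G_i$ and $G_\idP \subseteq G_{i-1}$ respectively. Using the cyclic-generator reformulation from the previous paragraph, this is equivalent to $\sigma \notin G_i$ and $\sigma \in G_{i-1}$, which is precisely $\sigma \in G_{i-1}\setminus G_i$. The converse direction is obtained by reversing the same chain of equivalences.

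There is essentially no obstacle here; the only subtlety worth a short sentence is to justify why tame ramification yields a cyclic inertia group (standard from Hilbert's ramification theory: the wild inertia is a $p$-group for the residue characteristic, and for tame $\idp$ this is trivial, leaving the tame quotient which embeds into $k(\idP)^\times$ and is therefore cyclic). After that one-line justification the proof is a direct translation through the preceding lemma.
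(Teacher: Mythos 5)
Your proof is correct and takes essentially the same route as the paper's (which just notes that tame inertia is cyclic, so $G_\idP \le G_i$ is equivalent to $\sigma \in G_i$, and then appeals to the preceding lemma). You simply spell out the two containment conditions and the justification for cyclicity in more detail than the paper's one-line argument.
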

\begin{proof}
  If $\idp$ is not wildly ramified, we know that $G_{\idP}=\langle \sigma \rangle$ is cyclic.
  Therefore $G_{\idP}\leq G_i$ is equivalent to $\sigma\in G_i$.
\end{proof}

If the
prime ideal $\idp$ is at most tamely ramified, we can use the index to describe the $\idp$-part of the discriminant (see also \cite[Section 7]{Ma4}). 
\begin{lemma}\label{lemind}
	Let $K/k$ be an extension of number fields with $\Gal(K/k)=G\leq S_n$ and $\idp\in\PP(k)$ be a tamely ramified prime ideal. Denote by $\sigma$ a generator of the (cyclic) inertia group (of the normal closure of $K/k$) and consider $\sigma$ as an element of the permutation group $G$. Then $v_\idp(\disc_{K/k}) = \ind(\sigma)$.
\end{lemma}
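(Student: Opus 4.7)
The plan is to reduce the statement to an orbit count of $\sigma$ acting on the coset space $G/H$, where $H$ is the point stabilizer realizing the permutation representation $G \leq S_n$, and then match this orbit count with the contribution of each prime of $K$ above $\idp$ to the discriminant.

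First I would recall that since $\idp$ is tamely ramified in $K/k$, the different exponent at each prime $\idq$ of $\OO_K$ above $\idp$ equals $e(\idq/\idp) - 1$. Writing $\idp\OO_K = \prod_{i} \idq_i^{e_i}$ and using the standard formula for the discriminant via the different, this gives
\[
v_\idp(\disc\nolimits_{K/k}) = \sum_{i} f_i(e_i - 1) = n - \sum_i f_i,
\]
where $f_i = f(\idq_i/\idp)$ and I have used $\sum_i e_i f_i = n = [K:k]$. On the other hand, by definition $\ind(\sigma) = n - \#\{\text{orbits of }\sigma\text{ on }\{1,\dots,n\}\}$, so it suffices to prove that the number of $\langle\sigma\rangle$-orbits on $G/H$ equals $\sum_i f_i$.

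To match these, I would invoke Hilbert's ramification theory in the normal closure $\hat K$ with $\Gal(\hat K/k)=G$ and $K=\hat K^H$. Pick a prime $\idP$ of $\hat K$ above $\idp$, with decomposition group $D=D_\idP$ and (cyclic, tame) inertia group $I=\langle\sigma\rangle \triangleleft D$. The primes $\idq_i$ of $K$ above $\idp$ correspond bijectively to $D$-orbits on $G/H$, equivalently to double cosets $D\backslash G/H$; choose representatives $g_i$ so that $\idq_i \leftrightarrow Dg_iH$. Standard orbit-stabilizer arguments give
\[
e_i f_i = |D : D \cap g_iHg_i^{-1}|, \qquad e_i = |I : I \cap g_iHg_i^{-1}|,
\]
the latter being the size of the $\langle\sigma\rangle$-orbit of $g_iH$. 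Hence the number of $\langle\sigma\rangle$-orbits contained in the $D$-orbit $Dg_iH$ equals $e_if_i / e_i = f_i$. Summing over $i$ yields $\sum_i f_i = \#\{\langle\sigma\rangle\text{-orbits on }G/H\}$, completing the identification.

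The main obstacle is verifying the two orbit-size identities $e_i f_i = |D:D\cap g_iHg_i^{-1}|$ and $e_i = |I:I\cap g_iHg_i^{-1}|$; these are the heart of the argument and are precisely where tameness (cyclicity of $I$) and Hilbert's theory enter. Once these are in place, the combination
\[
v_\idp(\disc\nolimits_{K/k}) = n - \sum_i f_i = n - \#\{\text{orbits of }\sigma\} = \ind(\sigma)
\]
is immediate, proving the lemma.
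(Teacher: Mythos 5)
Your proof is correct and fills in exactly the argument the paper compresses into a citation (``Dedekind's different theorem and results about the decomposition of prime ideals in intermediate fields''): tameness gives different exponent $e_i-1$, so $v_\idp(\disc_{K/k})=n-\sum_i f_i$, and Hilbert ramification theory identifies $\sum_i f_i$ with the number of $\langle\sigma\rangle$-orbits on $G/H$. The only step you should flag explicitly is that dividing the $D$-orbit size $e_if_i$ by $e_i$ really counts the $I$-orbits inside $Dg_iH$: this needs that all $I$-orbits within a single $D$-orbit have the same cardinality, which follows from the normality of the inertia group $I=\langle\sigma\rangle$ in the decomposition group $D$ (conjugating a point by $d\in D$ conjugates its $I$-stabilizer by $d$ inside $I$, leaving the index unchanged).
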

\begin{proof}
	The proof follows from Dedekind's different theorem and results about the decomposition of prime ideals in intermediate fields, see \cite[Section 6.3]{Koc}.
\end{proof}

Let
\begin{equation}\label{eq:ai}
  A_i:=G_{i-1} \setminus G_{i}\mbox{ for }1\leq i \leq r, a_i:=\min_{g\in  A_i}(\ind(g)) \mbox{ and } \ell_i:=\frac{|G_{i-1}|}{|G_i|}.
\end{equation}

Since the inertia generator must be one of the elements of $A_i$ we
get that $v_{\idp}(d_{K/k}) \geq a_i$.  Define $\ida_i\subseteq \OO_k$
to be the product of the ideals which become
firstly ramified in $K_i$.  In this way we associate to $K/k$ the
tuple of ideals $(\ida_1,\ldots,\ida_r)$. Note that all $\ida_i$ are squarefree and pairwise coprime. Furthermore all prime ideals $\idp$ dividing $\ida_i$ are contained in the set $B_i$ of Lemma \ref{logbound2}.
For the discriminant we get the estimate:
$$\Norm(d_{K/k}) \geq \Norm(\ida_1)^{a_1}\cdots \Norm(\ida_r)^{a_r}.$$

In order to obtain our wanted upper bound we have to compute how many
fields can be associated to the same tuple of ideals. We remark that
from the definition of the tuple we have that the ideals are pairwise
coprime. Denote by $I_k$ the group of ideals of $\OO_k$ and define the
following map using the above notation: 
$$\Phi: \{K/k \mid \Gal(K/k) = G\} \rightarrow I_k^r.$$ 
As before $\omega(\ida)$ denotes the number of different prime ideal factors of $\ida$.
\begin{theorem} \label{count}
  Let $k$ be a number field and $G\leq S_n$ be a transitive nilpotent group. We choose the $G_i$ given in \eqref{choice_Gi}.  Define
  $$b_i:=\omega(\prod_{j=1}^{i-1} \ida_j) + c(k,\ell_i),$$
  where $c(k,\ell_i)$ defined in Theorem \ref{bound_exact} only depends on $[k:\Q]$ and $\rk_{\ell_i}(\Cl_k)$.
  Then, all fibers of the map $\Phi$
  described above are finite of size at most
  $$\ell_1^{b_1}\cdots \ell_r^{b_r} (\ell_1-1)^{\omega(\ida_1)} \cdots (\ell_r-1)^{\omega(\ida_r)}.$$
\end{theorem}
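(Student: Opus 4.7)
The plan is to induct on $i$ along the central series \eqref{choice_Gi}, bounding the number of choices of $K_i$ for each fixed tower $k=K_0\subset\cdots\subset K_{i-1}$ with $\Gal(K_{i-1}/k)\cong G/G_{i-1}$ and prescribed ramification data $(\ida_1,\dots,\ida_{i-1})$. At step $i$ I will show there are at most $\ell_i^{b_i}(\ell_i-1)^{\omega(\ida_i)}$ choices of $K_i/K_{i-1}$ with $\Gal(K_i/k)\cong G/G_i$ such that the primes of $\OO_k$ first ramifying in $K_i/k$ are exactly those of $\ida_i$; multiplying these step-wise bounds over $i=1,\dots,r$ gives the stated bound on the fibre of $\Phi$.

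At step $i$, the quotient $G_{i-1}/G_i$ has prime order $\ell_i$ and lies in the centre of $G/G_i$ by the choice in \eqref{choice_Gi}, so $K_i/k$ is a solution to a central embedding problem with kernel $C_{\ell_i}$. By Lemma \ref{numberembed} there are only finitely many such embedding problems to consider (a count depending only on $G$), so I may treat one at a time. If no solution has the required first-ramification at $\ida_i$, the contribution is zero; otherwise I fix one solution $L:=K_i$, and by Theorem \ref{satzzen2} every other solution $\tilde L$ is indexed by a non-trivial class $[M]\in\mathcal{M}_K$, with exactly $\ell_i-1$ solutions above each non-trivial class and a single solution $L$ above the trivial class.

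The ramification condition $\Phi(K_i)=(\ida_1,\dots,\ida_i)$ translates into a constraint on $M/k$: every $\idp\in\ida_i$ is unramified in $K_{i-1}$ but must be ramified in $\tilde L$, forcing $M$ to be ramified at $\idp$; every $\idp\notin\ida_1\cup\cdots\cup\ida_i$ is unramified in $K_{i-1}$ by the definition of the $\ida_j$ and must remain unramified in $\tilde L$, forcing $M$ to be unramified at $\idp$; and ramification of $M$ at primes in $\ida_1\cup\cdots\cup\ida_{i-1}$ is permitted. Applying Theorem \ref{bound_exact} to $k$ with $\ell=\ell_i$, $S=\{\idp:\idp\mid\ida_i\}$ and $T=\{\idp:\idp\mid\prod_{j<i}\ida_j\}$ gives at most
\[
\ell_i^{c(k,\ell_i)+\omega(\prod_{j<i}\ida_j)}(\ell_i-1)^{\omega(\ida_i)}=\ell_i^{b_i}(\ell_i-1)^{\omega(\ida_i)}
\]
valid extensions $M/k$. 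Since each non-trivial class of $\mathcal{M}_K$ contains at least $\ell_i$ representative cyclic extensions (Theorem \ref{satzzen2}(iii), as $G/G_i$ has a $C_{\ell_i}$-quotient), the number of non-trivial classes that admit a valid $M$ is at most $\ell_i^{-1}$ times this count, and the $(\ell_i-1)$-to-one fibres of $\psi$ then bound the number of solutions $\tilde L$ at this step by $\ell_i^{b_i}(\ell_i-1)^{\omega(\ida_i)}$ as well.

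The main obstacle is the careful ramification bookkeeping at primes above $\ell_i$, where wild ramification in $M$ need not transfer faithfully to $\tilde L$: this is why the count must be run over $k$ rather than over $K_{i-1}$ (whose class group is not uniformly controlled), and why the generous constant $c(k,\ell_i)$ of Lemma \ref{lbound} is used to absorb the finitely many primes of $\OO_k$ above $\ell_i$. Iterating the step-wise estimate over $i=1,\dots,r$ produces the product bound claimed in the theorem.
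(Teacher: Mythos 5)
Your plan follows the same route as the paper: build $K$ one layer at a time along the chosen refinement \eqref{choice_Gi}, at each layer view $K_i/K_{i-1}$ as a solution of a central embedding problem with cyclic kernel $C_{\ell_i}$, parametrise solutions via Theorem \ref{satzzen2} by cyclic degree-$\ell_i$ extensions $M/k$, and bound the latter by Theorem \ref{bound_exact}; the step-wise bounds then multiply. You also assign $S$ and $T$ in Theorem \ref{bound_exact} the right way round (the paper's printed proof appears to swap them, which is clearly just a slip since the displayed bound only comes out with your assignment).

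There is, however, a real gap in the middle of your argument, in the step where you deduce that $M$ is forced to be ramified at every $\idp\mid\ida_i$. You choose $L:=K_i$ to be a solution \emph{in the fibre}, i.e.\ itself ramified at all $\idp\mid\ida_i$. But then both $L$ and $\tilde L$ are ramified at such a $\idp$, and the $C_{\ell_i}$-extension $M/k$ with $LM=L\tilde L$ can perfectly well be \emph{unramified} at $\idp$ (it is unramified exactly when $K_{i-1}M$ is the inertia field in $L\tilde L/K_{i-1}$ at $\idp$, which is one of the $\ell_i+1$ intermediate fields). The implication ``$\tilde L$ ramified at $\idp$ $\Rightarrow$ $M$ ramified at $\idp$'' holds only when the reference solution $L$ is \emph{unramified} at $\idp$. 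So the correct move — which the paper leaves implicit and you should make explicit — is to take $L$ to be an auxiliary solution of the embedding problem that is unramified at all $\idp\mid\ida_i$ (obtained, if necessary, by twisting an arbitrary solution by suitable $C_{\ell_i}$-extensions of $k$; the extra ramification this introduces is confined to a fixed finite set depending only on $k$ and $\ell_i$, and is absorbed into the constant $c(k,\ell_i)$). With that choice, the inertia group of any $\idp\mid\ida_i$ in $L\tilde L/K_{i-1}$ is $\Gal(L\tilde L/L)$, and \emph{every} representative $M'$ of the class $\psi(\tilde L)$, being not contained in $L$, fails to contain $\Gal(L\tilde L/L)$ in $\Gal(L\tilde L/M')$ — hence every representative is ramified at $\idp$. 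Once this is in place your final ``each non-trivial class contains at least $\ell_i$ valid $M$'' step becomes correct as well (in fact all $\ell_i^{r}$ representatives are valid), but as written, with $L$ taken in the fibre, that step too is unjustified, since the $\ell_i^{r}$ representatives of a class need not share the same ramification in general. The paper's own proof is terse enough that it does not address the choice of $L$ either, so you are being penalised here mostly for being explicit; but the explicit version does need the careful choice of base solution to close the argument.
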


\begin{proof}
  In Theorem \ref{satzzen2} we have proved that all solutions of a
  central embedding problem with kernel $C_{\ell}$ can be associated to
  cyclic extensions of order $\ell$ of $k$.  In order to get an upper
  bound we have to count these cyclic extensions which are at most
  ramified in the ideals $\ida_i$. Suppose we are in the $i$-th step.
  Then we have to compute the number of $C_{\ell_i}$-extensions which are
  at most ramified in the ideals dividing $\ida_1\cdots\ida_i$ and
  which are ramified in all ideals dividing $\ida_i$. Define
  $S:=\{\idp \in \PP(k) : \idp \mid \ida_1\cdots \ida_{i-1}\}$ and
  $T:=\{ \idp \in \PP(k) : \idp \mid \ida_i\}$ and apply Theorem \ref{bound_exact}. Then we get $\ell_i^{b_i}(\ell_i-1)^{\omega(\ida_i)}$ for
  the $i$-th step.
 \end{proof}

For the final step note that $A_i,a_i$, and $\ell_i$ are defined in \eqref{eq:ai}.
\begin{theorem} \label{count2}
  Let $G\leq S_n$ be a transitive nilpotent group and $k$ be a number field. Then there exist constants $d(k,G)\geq b(k,G), c(k,G)>0$ such
  that for all $x> 1$:
  $$Z(k,G;x) \leq c(k,G) x^{a(G)} \log(x)^{d(k,G)-1}.$$
\end{theorem}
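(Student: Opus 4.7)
The plan is to combine the field-counting bound of Theorem \ref{count} with the index-based discriminant lower bound supplied by Lemma \ref{lemind}, and then feed the resulting double sum into the analytic tool of Lemma \ref{logbound2}.

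First, I would fix a refined upper central series $E=G_r<\cdots<G_0=G$ as in \eqref{choice_Gi}, with the quantities $A_i,a_i,\ell_i$ from \eqref{eq:ai}, and attach to each $K/k$ with $\Gal(K/k)=G$ its tuple $(\ida_1,\ldots,\ida_r)=\Phi(K)$ of pairwise coprime squarefree ideals. At a tamely ramified prime $\idp\mid\ida_i$, Lemma \ref{lemind} gives $v_\idp(d_{K/k})=\ind(\sigma)\ge a_i$ since the inertia generator $\sigma$ lies in $A_i$. The wildly ramified primes lie in the fixed finite set of divisors of $\ell_1\cdots\ell_r$ in $\OO_k$ and contribute only an absorbable multiplicative constant. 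Hence, up to that bounded correction,
\[ \Norm(d_{K/k})\;\ge\;\prod_{i=1}^{r}\Norm(\ida_i)^{a_i}. \]

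Next, Theorem \ref{count} bounds the fibre of $\Phi$ above $(\ida_1,\ldots,\ida_r)$ by $\prod_i \ell_i^{b_i}(\ell_i-1)^{\omega(\ida_i)}$ with $b_i=c(k,\ell_i)+\sum_{j<i}\omega(\ida_j)$. Swapping the order of summation in the exponents of the $\ell_i$ rewrites this as
\[ C(k,G)\prod_{i=1}^{r} m_i^{\omega(\ida_i)},\qquad m_i:=(\ell_i-1)\prod_{j>i}\ell_j, \]
where $C(k,G):=\prod_i \ell_i^{c(k,\ell_i)}$ depends only on $k$ and $G$. Each $\ida_i$ is squarefree, and its prime divisors lie in the set $B_i$ of Lemma \ref{logbound2} (a prime ramifying in the $C_{\ell_i}$-step at level $i$ must have norm $\equiv 0$ or $1\bmod\ell_i$). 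Summing over tuples with $\prod\Norm(\ida_i)^{a_i}\le x$ and invoking Lemma \ref{logbound2} with $d_i=a_i$ and $m_i$ as above would yield
\[ Z(k,G;x)\;\le\; c(k,G)\,x^{1/d}\log(x)^{e-1}, \]
where $d=\min_i a_i=\ind(G)=1/a(G)$ and $e=\sum_{i:\,a_i=d} m_i/n_{\ell_i}$ with $n_{\ell_i}=[k(\zeta_{\ell_i}):k]$. I would then set $d(k,G):=e$, which produces the bound with the correct $x$-exponent $a(G)$.

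The last step is the inequality $d(k,G)\ge b(k,G)$. By Corollary \ref{crit_prime}, every $k$-conjugacy class of minimal index consists of elements of a single prime order $\ell$ all lying in the Sylow $\ell$-subgroup of $G$; my plan is to choose the refinement \eqref{choice_Gi} so that the minimal-index central layers are inserted first (all with $\ell_i=\ell$ and $a_i=\ind(G)$), and then to match the weighted sum $\sum_{i:\,a_i=d} m_i/n_{\ell_i}$ against the count $b(k,G)$ of Galois orbits on minimal-index conjugacy classes. I expect this matching to be the \emph{main obstacle}: one has to translate the analytic weights $(m_i,n_{\ell_i})$ into the action of $\Gal(k(\zeta_\ell)/k)$ on $C_\ell$-extensions of an appropriate subfield, and then reconcile that action with the Galois action on conjugacy classes that defines $b(k,G)$. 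Making the wild-ramification estimate rigorous is by comparison a minor book-keeping task.
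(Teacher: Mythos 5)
Your plan for the main analytic bound coincides step for step with the paper's proof: attach to each $K/k$ the tuple $(\ida_1,\ldots,\ida_r)$ via $\Phi$, use the discriminant estimate $\Norm(d_{K/k})\geq\prod_i\Norm(\ida_i)^{a_i}$, bound the fibres by Theorem~\ref{count}, regroup the exponents of the $\ell_i$ into $\prod_i\ell_i^{c(k,\ell_i)}\cdot\prod_i m_i^{\omega(\ida_i)}$ with $m_i=(\ell_i-1)\prod_{j>i}\ell_j$, and invoke Lemma~\ref{logbound2}. Your $e=\sum_{i:a_i=d}m_i/n_{\ell_i}$ is exactly the paper's $d(k,G)$ from \eqref{eq:upp}, since by Corollary~\ref{crit_prime} every summand has $\ell_i=\ell$, so $n_{\ell_i}=n_\ell$ is constant across the sum.

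You misjudge, however, where the difficulty lies. The inequality $d(k,G)\geq b(k,G)$ is not the ``main obstacle'' and does not require translating the analytic weights $(m_i,n_{\ell_i})$ into a Galois action on $C_\ell$-extensions. It is a short combinatorial fact (Remark~\ref{rem:d} and the line after it). Since $G\setminus\{1\}=\dot\cup_i A_i$ with $|A_i|=m_i$, the quantity $d(G)=\sum_{i:\,a_i=\ind(G)}|A_i|$ is at least $N:=\#\{g\in G:\ind(g)=\ind(G)\}$, because every minimal-index element lies in one of the $A_i$ appearing in that sum. On the other hand, each $k$-conjugacy class of minimal index consists of order-$\ell$ elements and contains at least $n_\ell$ of them: the cyclotomic orbit of a conjugacy class $[g]$ under $\Gal(k(\zeta_\ell)/k)$ has size $n_\ell/|{\rm Stab}([g])|$, all classes in the orbit have the same size $|G|/|C_G(g)|$, and ${\rm Stab}([g])$ embeds in $N_G(\langle g\rangle)/C_G(g)$, so $|G|/|C_G(g)|\geq|{\rm Stab}([g])|$. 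Hence $b(k,G)\cdot n_\ell\leq N\leq d(G)$, i.e.\ $b(k,G)\leq d(k,G)$, for every refinement. Your further plan to ``insert the minimal-index central layers first'' also does not work in general: the minimal-index elements need not lie in the center (cf.\ $D_4\leq S_8$ in the paper), so one cannot choose the $G_i$ to isolate them; the freedom in \eqref{choice_Gi} is used only to shrink $d(k,G)$ when possible (Lemmas~\ref{ab} and~\ref{nil}), not to establish $d(k,G)\geq b(k,G)$.

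A minor note on wild ramification: you propose to absorb wildly ramified primes into a multiplicative constant, which is legitimate but needs Serre's uniform bound on wild discriminant exponents (the device used in Lemma~\ref{neglect1}). The paper instead asserts $v_\idp(d_{K/k})\geq a_i$ directly for all $\idp\mid\ida_i$, tame or wild, since wild ramification only increases the discriminant exponent. Either route works; just be explicit about which estimate you are using.
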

\begin{proof}
  Using Theorem \ref{count} and the discriminant estimate we get:
  $$Z(k,G;x) \leq \sum_{\Norm(\ida_1)^{a_1}\cdots\Norm(\ida_r)^{a_r}\leq x}
  \ell_1^{b_1}\cdots \ell_r^{b_r} (\ell_1-1)^{\omega(\ida_1)}\cdots (\ell_r-1)^{\omega(\ida_r)}$$
  $$\leq   \prod_{i=1}^r \ell_i^{c(k,\ell_i)}  \sum_{\Norm(\ida_1)^{a_1}\cdots\Norm(\ida_r)^{a_r}\leq x}
  m_1^{\omega(\ida_1)} \cdots m_r^{\omega(\ida_r)},$$
  where  $m_r=(\ell_r-1), m_{r-1}=\ell_r(\ell_{r-1}-1),\ldots,m_1=\ell_r\cdots \ell_2(\ell_1-1)$.
  Using Lemma \ref{logbound2} we get the desired bound since $a(G)^{-1}$ is 
  the minimal index which occurs in the group $G$. Note that $\idp\mid \ida_i$ implies that $\idp\in B_i$ in the notation of Lemma \ref{logbound2}.
\end{proof}
Now we compute $d(k,G)$ which furthermore depends on our chosen groups $G_i$ in \eqref{choice_Gi}.
Let $\ell$ be the order of elements of smallest index, see Corollary \ref{crit_prime}.
Using $a:=\min(a_1,\ldots,a_r)$ and $n_\ell:=[k(\zeta_\ell):k]$ we define  
\begin{equation}\label{eq:upp}
d(G) := \sum_{i: a_i=a}m_i \text{ and }
d(k,G) :=d(G)/n_\ell.
\end{equation}

\begin{remark} \label{rem:d}
  In the situation of Theorem \ref{count2} we have for all valid choices in \eqref{choice_Gi}:
  $$\#\{g\in G\mid \ind(g)=\ind(G)\} \leq d(G)\leq |G|-1.$$
\end{remark}
\begin{proof}
  Using \eqref{choice_Gi} and \eqref{eq:ai} we see that $$G\setminus \{1\} = A_1\dot\cup\ldots\dot\cup A_r \mbox{ and }
  m_i=|A_i|=(\ell_i-1)\prod_{j=i+1}^r \ell_j.$$
  Inserting this in \eqref{eq:upp} we get the upper bound
  $$d(G) = \sum_{i: a_i=\ind(G)} m_i = \sum_{i: a_i=\ind(G)} |A_i|\leq |G|-1,$$
  since $1$ is not contained in any $A_i$.
\end{proof}
We remark that the constant $b(k,G)$ of Malle is trivially bounded above
by the number of elements with minimal index.

As already mentioned the proven upper bound for $d(k,G)$ is depending on our choice in \eqref{choice_Gi}. In the following we would like to find an optimal choice. As before let $\ell$ be the order of an element of smallest index in $G$.
 In a first step we choose our series of groups $E=G_r<G_{r-1}<\ldots<G_0=G$ in a way
such that there exists an $i$ with $G_i$ is an $\ell$-group and $G/G_i$ is coprime to $\ell$. This means
that $\Gal(K_r/K_i)$ is an $\ell$-group and the minimal index occurs for $j$ with $j\geq i$. From
this we see that we can prove the same upper bound for $d(k,G)$ as we can do for the
corresponding Sylow $\ell$-subgroup. The $a(G)$-constant is certainly independent of the choice of \eqref{choice_Gi}.

In order to study the values $d(k,G)$ we restrict to $\ell$-groups $G$. The following examples show that different choices lead to different results. We use equation \eqref{eq:ai}.
\begin{example}
  Let $G= \langle a,b\mid a^4 = b^2 =1,ab=ba\rangle \cong C_4 \times C_2\leq S_8$. We define $E=G_3<G_2= \langle a^2 \rangle<G_1 =\langle a^2,b\rangle<G_0=G$. Then $A_i:=G_{i-1}\setminus G_i$ for $1\leq i \leq 3$ and $a_i:=\min_{g\in A_i}(\ind(g))$.
  All elements in $A_1$ have order 4 and therefore $\ind(g)=6$ for all $g\in A_1$. All elements in $A_2$ and $A_3$ have
  order 2 and their index is 4. Now $d(k,G)=d(G) = |A_2|+|A_3| = 2+1=3=b(k,G)$ for all number fields $k$.

  If we choose $E< \langle a^2 \rangle < \langle a \rangle <G$, we get that the corresponding set $A_1$ contains elements
  of order 4 and of order 2 and therefore $a_1=4$. Furthermore, we get $a_2=6$ and $a_3=4$. Therefore
  $d(k,G)=d(G) = |A_1|+|A_3| = 4+1=5.$
\end{example}
For an abelian $\ell$-group it is clear how to minimize $d(G)$ (and $d(k,G)$). A trivial lower bound for $d(G)$ is the number
of elements of order $\ell$ since the minimal index is always attained at an element of prime order. In a
transitive abelian group the index of an element is only depending on the order of the element.
\begin{lemma}\label{ab}
  Let $G$ be an abelian $\ell$-group of order $\ell^r$ and $\ell$-rank $s$. If we choose the central series
  in a way such that $G_{r-s} \cong C_\ell^s$ then we get that $d(G) = \ell^s-1$ and $d(k,G) = b(k,G)$.
\end{lemma}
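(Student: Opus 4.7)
The plan is to first identify $G_{r-s}$ explicitly as the $\ell$-torsion subgroup $G[\ell]:=\{g\in G: g^\ell=1\}$. Since $G$ has $\ell$-rank $s$, we have $|G[\ell]|=\ell^s$, and the hypothesis $G_{r-s}\cong C_\ell^s$ together with $|G_{r-s}|=\ell^s$ forces $G_{r-s}=G[\ell]$. A parallel easy observation is that a transitive abelian subgroup of $S_n$ acts regularly (all point stabilizers coincide and hence must be trivial), so $n=|G|=\ell^r$, and every element of order $\ell^k$ has cycle type consisting of $\ell^{r-k}$ cycles of length $\ell^k$, hence index $\ell^r-\ell^{r-k}$. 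In particular $\ind(G)=\ell^{r-1}(\ell-1)$, realized precisely by the elements of order $\ell$.

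With these preliminaries in hand, I would determine which indices $i$ actually contribute to $d(G)$. For $i>r-s$ the set $A_i=G_{i-1}\setminus G_i$ lies inside $G_{r-s}\setminus\{1\}=G[\ell]\setminus\{1\}$, so every element of $A_i$ has order exactly $\ell$ and $a_i=\ind(G)$. For $i\leq r-s$ we have on the contrary $G_i\supseteq G_{r-s}=G[\ell]$, whence $A_i$ contains no element of order $\ell$, and the index formula above gives $a_i>\ind(G)$. Thus only the indices $i\in\{r-s+1,\dots,r\}$ contribute, and using $m_i=(\ell-1)\ell^{r-i}$ (valid because every $\ell_j=\ell$) a direct geometric sum gives
\[
d(G)=\sum_{i=r-s+1}^{r}(\ell-1)\ell^{r-i}=(\ell-1)\frac{\ell^s-1}{\ell-1}=\ell^s-1.
\]

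Finally, I would compare $d(k,G)=d(G)/n_\ell=(\ell^s-1)/n_\ell$ with $b(k,G)$. The elements of minimal index are the $\ell^s-1$ nontrivial elements of $G[\ell]$, each forming its own $G$-conjugacy class since $G$ is abelian. The absolute Galois group of $k$ acts on conjugacy classes via its action on $\bar\Q$-characters of $G$; for an element $g$ of prime order $\ell$ this action factors through $\Gal(k(\zeta_\ell)/k)\hookrightarrow(\Z/\ell\Z)^\times$ and sends $g$ to $g^a$. Since this action is free on $G[\ell]\setminus\{1\}$, every $k$-conjugacy orbit has size $n_\ell$, so the number of orbits is $(\ell^s-1)/n_\ell=d(k,G)$, proving $b(k,G)=d(k,G)$. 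The main technical point worth spelling out carefully is this last identification of $k$-conjugacy of order-$\ell$ elements with the Galois action on exponents, but it is a standard character-theoretic computation.
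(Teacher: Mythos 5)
Your proof is correct and follows essentially the same route as the paper: isolate the sets $A_{r-s+1},\dots,A_r$ as the ones carrying the minimal index, sum $|A_i|$ to get $\ell^s-1$, and divide by $n_\ell$. You also supply details the paper leaves implicit — the regular-action index formula $\ind(g)=\ell^r-\ell^{r-k}$ for $g$ of order $\ell^k$, and the verification that $\Gal(k(\zeta_\ell)/k)$ acts freely on $G[\ell]\setminus\{1\}$ so that each $k$-conjugacy class has size exactly $n_\ell$, which is the substance behind the paper's terse ``This shows $d(k,G)=b(k,G)$.''
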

\begin{proof}
  Denote by $a:=\ind(G)$.  The sets $A_1,\ldots,A_{r-s}$ do not
  contain elements of order $\ell$ and therefore $a_i>a$ for
  $1\leq i \le r-s$.  The sets $A_{r-s+1},\ldots, A_r$ only contain
  elements of order $\ell$ and we get that
  $\sum_{i=r-s+1}^r |A_i| = |G_{r-s}|-1=\ell^s-1$.  Now we have
  $d(k,G):=d(G)/n_\ell$, where $n_\ell:=[k(\zeta_\ell):k]$. This shows
  $d(k,G)=b(k,G)$.
\end{proof}
We remark that Theorem \ref{count2} proves the correct upper bound including $\log$-factor for abelian groups $G$.
We have seen that our constant $d(G)$ is bounded below by the number of elements of minimal index. For non-abelian
groups Malle conjectures a dependency on conjugacy classes of minimal index. Usually a conjugacy class contains more
than one element in the non-abelian case. Therefore we cannot expect to get the expected upper bound for non-abelian
nilpotent groups. Sometimes we are lucky.

Let us study the non-abelian groups of order 8. Both groups $G$ have a
center $Z(G)$ of order 2 and therefore the upper central series is
given by $E< Z(G) < G$, where $G/Z(G)$ is a group of order 4 and we
have a choice for the refinement of the upper central
series. Since the center contains only one element of order 2, we get
$A_3=Z(G) \setminus \{1\}$ and therefore $a_3=4$ in all cases. In the case of the quaternion group $Q_8$ all
elements not contained in the center have order 4 and index
6. Independent of the refinement we get $a_1=a_2=6$ for $G=Q_8$. Let
us consider the dihedral group on 8 points. This group has exactly one
cyclic subgroup of order $4$. If we choose this group for $G_1$, we get that $A_2=G_1\setminus Z(G)$ contains two elements of order 4. The set $A_1=G\setminus G_1$ contains the remaining elements, all of them have order 2. Altogether, we get $a_1=4$ and $a_2=6$. We proved the correctness of the following
examples. If we choose a Klein four group for $G_1$, then we get
$a_1=4=a_2$. Here we have the problem that $A_1$ contains elements of
order 4 and of order 2, which gives worse exponents for our bound.

\begin{example}
  \begin{enumerate}
  \item Let $G=Q_8$. Then we get
    $Z(k,G;x) \leq O(x^{a(G)})$, which is as conjectured. Note that
    $a_1=a_2=6$ and $a_3=4$.
  \item  Let $G=D_4\leq S_8$. Then we get
    $Z(k,G;x) \leq O(x^{a(G)}\log(x)^4)$. Malle predicts exponent 2 instead of 4. Note that
    $a_1=a_3=4$ and $a_2=6$. The difference between 4 and 2 can be explained by the fact that there are 5 elements of order 2 which lie in 3 conjugacy classes.
  \end{enumerate}
  For $Q_8$ we get a sharp bound. For $D_4\leq S_8$ the problem is that
  we count many $C_2\times C_2$-extensions, for which the embedding problem into
  $D_4$ is not solvable.
\end{example}
Note that the bound for $D_4$ we got in the last example is optimal for our method. We get that $|A_1|+|A_3|=4+1=5$. Note that $D_4$ contains 5 elements of order 2, so we cannot do better with our method. Other choices of refinements would lead to a log-power of $7-1$.

When we consider the dihedral group $D_4\leq S_4$ as permutation group on 4 points, the non-trivial central element
has index 2 and therefore $a_3=2$. If we choose the subgroup of order 4 in a way that it contains the central
element and the transpositions, we get $a_2=1$ and $a_1=2$. Therefore we get $d(G)=2$ and we get
$Z(k,D_4;x) = O(x\log(x))$ which is one $\log$-factor more than expected. When we choose other refinements,
we get $a_1=1$ and $d(G)=4$. We remark that it is known \cite{Bai, CoDiOl2} that $Z(k,D_4;x) \sim c x$. 

What is special about $G=Q_8$? The important property is that all elements of minimal index lie in the center of the group. We remark that this is trivially true for abelian groups. Elements in the center of  a transitive group are always given in regular representation and therefore their index is only depending on their order. Similar to the abelian case we can prove.
\begin{lemma}\label{nil}
	Let $G\leq S_n$ be a transitive nilpotent group such that all elements of minimal index lie in the center of $G$. Then the number of those elements equals $\ell^s-1$ for a prime number $\ell$ and $s\in\N$. If we choose the central series
	in a way such that one $G_{i} \cong C_\ell^s$ then we get that $d(G) = \ell^s-1$ and $d(k,G) = b(k,G)$.
\end{lemma}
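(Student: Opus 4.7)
The plan is to identify the set of minimal-index elements explicitly as $Z(G)[\ell]\setminus\{1\}$, for the common prime order $\ell$ of Corollary \ref{crit_prime}, then construct a central series \eqref{choice_Gi} passing through that subgroup and read off $d(G)$ from Remark \ref{rem:d}, and finally match $d(k,G)$ with $b(k,G)$ via the action of $\Gal(k(\zeta_\ell)/k)$.

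First I would establish the following key observation: for any nontrivial $z\in Z(G)$ of order $\ell$, every orbit of $\langle z\rangle$ on $\{1,\ldots,n\}$ has size exactly $\ell$. Indeed, if $z$ fixed some point $\omega$, then for $\omega'=g(\omega)$ with $g\in G$ the centrality of $z$ gives $z(\omega')=gz(\omega)=\omega'$, so $z$ would fix every point and be trivial in $S_n$. Hence $z$ acts fixed-point-freely, all orbits have size $\ell$, and $\ind(z)=n(\ell-1)/\ell$. This value depends only on $\ell$, so all order-$\ell$ central elements share the same index, and since at least one of them realizes $\ind(G)$ by hypothesis, all of them do. Conversely, a minimal-index element has prime order $\ell$ by Corollary \ref{crit_prime} and lies in $Z(G)$ by hypothesis, so it belongs to $Z(G)[\ell]$. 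Therefore the set of minimal-index elements equals $Z(G)[\ell]\setminus\{1\}$; since $Z(G)[\ell]$ is elementary abelian it has order $\ell^s$ for some $s\ge 1$, giving the stated count $\ell^s-1$.

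Next I construct the desired series \eqref{choice_Gi}. Because $Z(G)[\ell]$ is characteristic in $G$ and $G/Z(G)[\ell]$ is nilpotent, refining an upper central series of $G/Z(G)[\ell]$ and pulling back gives $G=G_0>G_1>\cdots>G_{r-s}=Z(G)[\ell]$ with every $G_{j-1}/G_j$ of prime order and central in $G/G_j$. Below, any chain of codimension-one $\F_\ell$-subspaces of $Z(G)[\ell]\cong C_\ell^s$ yields $G_{r-s}>\cdots>G_r=E$ with quotients of order $\ell$, which are central in $G/G_j$ since $Z(G)/G_j\subseteq Z(G/G_j)$. For $j\le r-s$ the set $A_j=G_{j-1}\setminus G_j$ is disjoint from $Z(G)[\ell]=G_{r-s}\subseteq G_j$, so every element of $A_j$ lies outside the set of minimal-index elements and satisfies $a_j>a:=\ind(G)$. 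For $j>r-s$ we have $A_j\subseteq Z(G)[\ell]\setminus\{1\}$, hence $\ell_j=\ell$ and $a_j=a$. Applying the explicit formula $m_j=(\ell_j-1)\prod_{i>j}\ell_i=(\ell-1)\ell^{r-j}$ from Remark \ref{rem:d}, we get
\begin{equation*}
d(G)=\sum_{j>r-s}m_j=(\ell-1)\sum_{t=0}^{s-1}\ell^t=\ell^s-1.
\end{equation*}

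Finally I compare with $b(k,G)$. Since minimal-index elements are central, each is its own $G$-conjugacy class, and $b(k,G)$ counts $\Gal(\bar\Q/k)$-orbits on $Z(G)[\ell]\setminus\{1\}$ for the action coming from $\bar\Q$-characters. This action factors through the cyclotomic character $\Gal(\bar\Q/k)\twoheadrightarrow\Gal(k(\zeta_\ell)/k)\hookrightarrow(\Z/\ell)^\times$ and is given by $z\mapsto z^a$; as $\Gal(k(\zeta_\ell)/k)$ embeds into $(\Z/\ell)^\times$, the stabilizer of any nontrivial $z$ is trivial, so every orbit has exactly $n_\ell:=[k(\zeta_\ell):k]$ elements. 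Hence $b(k,G)=(\ell^s-1)/n_\ell=d(G)/n_\ell=d(k,G)$. The main technical point I anticipate is the careful identification of the Galois action on conjugacy classes with the power action on $Z(G)[\ell]$ through the cyclotomic character; everything else is bookkeeping around \eqref{eq:upp} and the central-series construction.
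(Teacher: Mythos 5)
Your proof is correct and follows the paper's approach: identify the set of minimal-index elements as $Z(G)[\ell]\setminus\{1\}$, choose a refinement of the central series passing through $Z(G)[\ell]$, and reduce to the computation of Lemma~\ref{ab}, with centrality forcing singleton conjugacy classes so that $k$-conjugacy classes of minimal index are exactly Galois orbits on $Z(G)[\ell]\setminus\{1\}$. You have spelled out details the paper leaves implicit --- notably the fixed-point-free argument showing every nontrivial element of $Z(G)[\ell]$ shares the minimal index (so those elements plus the identity really do form the required elementary abelian subgroup), and the explicit computation of Galois orbit sizes through the cyclotomic character --- but the underlying argument is the same.
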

\begin{proof}
	Note that all elements of minimal index have prime order $\ell$. Since all these elements lie in the center we can choose one $G_i$ to exactly contain those elements. We proceed as in the proof of Lemma \ref{ab}. For the final equality note that the size of a conjugacy class of an element in the center is 1.	
\end{proof}
We remark that a converse of Lemma 5.13 is also true. If at least one element of minimal order does not lie in the center, then the conjugacy class of this element contains more than one element. Since $d(G)=d(k(\zeta_\ell),G)$ is bounded below by the number of elements of minimal index, we see that $b(k(\zeta_\ell),G)<d(k(\zeta_\ell),G)$, since $b(k(\zeta_\ell),G)$ equals the number of conjugacy classes of minimal index.

Let us define the generalized quaternion groups of order $4n$ for $n=2^k$ for all $k\ge 1$:
     \[ Q_{4n}:={\displaystyle \langle x,y\mid x^{2n}=y^{4}=1,x^{n}=y^{2},y^{-1}xy=x^{-1}\rangle }.\]
Note that $Q_{4\cdot 2}$ is the usual quaternion group and that those groups have only one transitive representation, which is the regular one acting on $4n$ points. These groups have only one element of order 2, which therefore lies in the center. Lemma \ref{nil} yields for $n=2^k$ with $k\ge 1$:
\[Z(k,Q_{4n};x) = O(x^{a(Q_{4n})}),\]
like expected in Conjecture \ref{con}.  
     
Using Corollary \ref{crit_prime} we can describe more cases, where our bound $d(k,G)=b(k,G)$. Let $\ell$ be the critical prime and $G_\ell$ be an $\ell$-group which fulfills the assumption of Lemma \ref{nil}. Then all direct products of the form $G_\ell \times H$, where $\ell\nmid |H|$ and $\ell$ is still the order of elements of smallest index, will fulfill the assumption of Lemma 5.13. 
Note that we get conjectured upper bound in all cases

 On the other hand the dihedral group has elements of smallest index not lying in the center, which cannot yield optimal bounds using our method. This is independent on the chosen permutation representation.

\section*{Acknowledgment}
The author would like to thank Brandon Alberts and Gunter Malle for suggestions on an earlier draft.
\bibliographystyle{plain}
\bibliography{myref}
\end{document}